\documentclass{siamart220329}


\usepackage{lipsum}
\usepackage{amsfonts}
\usepackage{graphicx}
\usepackage{epstopdf}
\usepackage{algorithmic}
\usepackage{amsmath,amssymb}

\newsiamremark{remark}{Remark}
\newsiamremark{hypothesis}{Hypothesis}
\crefname{hypothesis}{Hypothesis}{Hypotheses}

\headers{Analysis of EIM and CGA}{Y. Li}

\title{A new analysis of empirical interpolation methods and Chebyshev greedy algorithms\thanks{This work was partially supported by the National Science Foundation of China (no.~12471346) and   the Fundamental Research Funds for the Central Universities (no.~226-2023-00039).}}

\author{
  Yuwen Li\thanks{School of Mathematical Sciences, Zhejiang University, Hangzhou, Zhejiang 310058, China 
  (\email{liyuwen@zju.edu.cn}).}}

\usepackage{amsopn}
\numberwithin{theorem}{section}
\numberwithin{equation}{section}
\numberwithin{algorithm}{section}

\begin{document}

\maketitle

\begin{abstract}
We present new convergence estimates of generalized empirical interpolation methods in terms of the entropy numbers of the parametrized function class. Our analysis is transparent and leads to sharper convergence rates than the classical analysis via the Kolmogorov $n$-width. In addition, we also derive novel entropy-based convergence estimates of the Chebyshev greedy algorithm for sparse $n$-term nonlinear approximation of a target function. This also improves classical convergence analysis when corresponding entropy numbers decay fast enough.
\end{abstract}

\begin{keywords}
Empirical interpolation method, reduced basis greedy algorithm, Chebyshev greedy algorithm, metric entropy numbers, parametrized PDE, dictionary approximation.
\end{keywords}

\begin{MSCcodes}
41A46, 41A65, 65J05, 65M12 
\end{MSCcodes}

\section{Introduction}
Parametrized Partial Differential Equations (PDEs) are important mathematical models arising in a variety of physical contexts, e.g, material design, shape optimization, uncertainty quantification, inverse problems etc. For these real-world applications, it is necessary to efficiently evaluate the query of PDE solutions for many instances of the parameter, such as a family of elasticity and diffusion coefficients in deterministic PDEs and different realizations of random fields in stochastic PDEs. In particular, the numerical solver for a parametrized family of PDEs is expected to be much faster than repeatedly running  general-purpose PDE solvers, e.g., finite element and finite difference methods, for each parameter instance.

For many-query tasks such as solving parametric PDEs, model reduction is a ubiquitous strategy that  replaces the large-scale high-fidelity PDE model with an easy-to-solve reduced model. In recent decades, the Reduced Basis Method (RBM) is possibly one of the most notable model reduction techniques for parametric PDEs, see, e.g., \cite{Maday2006,CohenDeVore2015,Quarteroni2016} and references therein. Interested readers are also referred to \cite{KunischVolkwein2002,Nouy2010,BennerGugercinWillcox2015,SanBorggaard2015,KutzBruntonProctor2016,LeeCarlberg2020,AntoulasBeattieGugercin2020} and references therein for other reduced order modeling of  dynamical systems. However, the efficiency of RBMs and many other projection-based model reduction methods hinges on the affine parametrization structure  which might not be available in practice, see Section \ref{subsecRBGA} for more details. 

To overcome this drawback, the Empirical Interpolation Method (EIM) was invented in \cite{BarraultMadayNguyenPatera2004} for constructing affinely parametric approximants to coefficient functions in parametrized PDEs. Since then, the EIM was generalized in many ways and became an extremely popular tool for obtaining the affine structure as well as for efficient reduced order modeling and analysis of complex nonlinear systems and big datasets, see, e.g., \cite{NguyenPateraPeraire2008,EftangGreplPatera2010,ChaturantabutSorensen2010,EftangGreplPatera2012,MadayMula2013,NegriManzoniAmsallem2015,DrmacGugercin2016,SorensenEmbree2016,Saibaba2020,PeherstorferDrmacGugercin2020}. Classical convergence analysis of  EIMs could be found in, e.g., \cite{NguyenPateraPeraire2008,ChaturantabutSorensen2012,EftangGreplPatera2013,MadayMulaTurinici2016}. In fact, the EIM can be viewed as a weak reduced basis greedy algorithm with varying threshold constants at each iteration. Therefore, the work \cite{MadayMulaTurinici2016} modified the error analysis  of greedy RBMs in \cite{DeVorePetrova2013} and derived convergence rates of the EIM in terms of the Kolmogorov $n$-width $d_n(K)$ of the compact set $K$ of parametrized functions. For example, Corollary 14 in \cite{MadayMulaTurinici2016} shows that for each $t>0$,
\begin{equation}\label{classicalEIM}
    d_n(K)\leq Cn^{-t}\Longrightarrow\sup_{f\in K}\|f-\Pi_nf\|\leq C(1+\Lambda_n)^3n^{-t+1},
\end{equation}
where $\Pi_n$ is the EIM interpolation at the $n$-th iteration, $C$ is a generic constant that may change from line to line but is independent of $n$ throughout this paper, and $\Lambda_n$ is the norm of $\Pi_n$ and is called the Lebesgue constant. The analysis in \cite{MadayMulaTurinici2016} assumes that $\{\Lambda_n\}_{n\geq1}$ is a monotonically increasing sequence.

In this paper, we shall investigate the convergence of EIMs in a way  different from \cite{MadayMulaTurinici2016}. Instead of the Kolmogorov $n$-width, the key concept in our analysis are the metric entropy numbers $\varepsilon_n({\rm co}(K))$ of the symmetric convex hull ${\rm co}(K)$ of the underlying function set $K$. Such an idea originates from the author's work \cite{LiSiegel2024} about entropy-based convergence rates of greedy RBMs. 
Our new error analysis of EIMs is transparent and sharper than classical results of \cite{MadayMulaTurinici2016} in many cases.  In a Banach space, the proposed  convergence estimate is
\begin{equation}\label{ourEIMexample}
        \sup_{f\in K}\|f-\Pi_{n-1}f\|\leq C(1+\Lambda_{n-1})\left(\prod_{k=1}^{n-1}(1+\Lambda_k)\right)^{\frac{1}{n}}n\varepsilon_n({\rm co}(K)).
\end{equation}
Here we do not make any assumption on $\{\Lambda_n\}_{n\geq0}$ and $\{\varepsilon_n({\rm co}(K))\}_{n\geq0}$. For a standard parametrized compact set $K$, it is shown in \cite{SiegelXuFoCM} that $n^{\frac{1}{2}}\varepsilon_n({\rm co}(K))=d_n(K)=O(n^{-t})$ as $n\to\infty$ for some $t>0$. It is also numerically observed (see \cite{MadayNguyenPateraPau2009}) that $\Lambda_n$ mildly grows in practice. Therefore, \eqref{ourEIMexample} is sharper than the classical result \eqref{classicalEIM} for a wide range of function classes. In addition, the estimate \eqref{ourEIMexample} could be improved in Hilbert spaces and special Banach spaces, see Section \ref{secEIM} for the detailed analysis and comparisons.

The EIMs are greedy algorithms designed to simultaneously approximate a compact collection of parameter-dependent functions. On the other hand, greedy algorithms are also widely used for sparse nonlinear approximation of a single target function. These algorithms adaptively select basis functions $\{g_i\}_{1\leq i\leq n}$ from a compact dictionary $K\subset X$ and use a $n$-term linear combination $$f_n=\sum_{i=1}^nc_ig_i,\quad g_i\in K$$ 
to approximate a function $f\in X$. Important applications of sparse greedy approximations include nonparametric statistical regression \cite{FriedmanStuetzle1981,Jones1992}, matching pursuit in compressed sensing \cite{MallatZhang1993}, training shallow neural networks \cite{SiegelXu2022} etc. We refer to \cite{Temlyakov2008,Temlyakov2011} for a thorough introduction to greedy dictionary approximation. Among such greedy algorithms for a single target function, it is recently shown in \cite{SiegelXu2022,LiSiegel2024} that the Orthogonal Greedy Algorithm (OGA) in Hilbert spaces exhibits the fastest convergence rate provided ${\rm co}(K)$ has small entropy numbers.

The OGA was generalized to Banach spaces and named as the Chebyshev Greedy Algorithm (CGA) by Temlyakov in \cite{Temlyakov2001}. The convergence of the CGA in its simplest form (see \cite{Temlyakov2001}) reads
\begin{equation}\label{classicalCGA}
    \|f-f_n\|\leq Cn^{-1+\frac{1}{s}},\quad f\in{\rm co}(K), 
\end{equation}
where $s\in(1,2]$ is the power in the modulus of smoothness of the underlying Banach space $X$ (see~\cite{LindenstraussTzafriri1979,Donahue1997,Temlyakov2001} and \eqref{rhoX}). The second main contribution of our work is the novel convergence estimate of the CGA based on entropy numbers: 
\begin{equation}\label{myCGA}
    \|f-f_n\|\leq C_fn^{\frac{1}{s}-\frac{1}{2}}\delta_{X,n}\varepsilon_n({\rm co}(K)),
\end{equation}
where $C_f$ is an explicit uniform constant depending on the target function $f$, and $\delta_{X,n}$ is the supremum of the Banach--Mazur distance between $\ell_2^n$ and $n$-dimensional subspaces of $X$. Our convergence estimate \eqref{myCGA} is a Banach version of the result about OGAs in \cite{LiSiegel2024}. 
 In addition, \eqref{myCGA} leads to  improved convergence rates of the CGA under the condition $\delta_{X,n}\varepsilon_n({\rm co}(K))=o(n^{-\frac{1}{2}})$, which is true for popular dictionaries in Banach spaces, e.g., the $\texttt{ReLU}_m$ dictionary in $L_p(\Omega)$ with $1<p<\infty$, see Section \ref{secCGA} for details.

The rest of this paper is organized as follows. In Section \ref{secGA}, we briefly introduce the EIM and the CGA in Banach spaces. In Section \ref{secEIM}, we present a convergence analysis of the EIM in terms of entropy numbers. Section \ref{secCGA} is devoted to an entropy-based convergence estimate of the CGA and related numerical experiments. 

\section{Greedy Algorithms}\label{secGA}
Let $K$ be a compact set in a Banach space $X$ under the norm $\|\bullet\|=\|\bullet\|_X$. By ${\rm co}(K)$ we denote the  symmetric convex hull of $K$, i.e.,
\[
\text{co}(K):=\overline{\Big\{\sum_{i}c_ig_i:   \sum_{i}|c_i|\leq1,~g_i\in K\text{ for each }i\Big\}}.
\]
The Kolmogorov $n$-width of $K$ is defined as 
\begin{equation*}
d_n(K)=d_n(K)_X:=\inf_{X_n}\sup_{f\in K}{\rm dist}(f,X_n),
\end{equation*}
where the infimum is taken over all $n$-dimensional subspaces of $X$, and 
\begin{equation*}
    {\rm dist}(f,X_n):=\inf_{g\in X_n}\|f-g\|.
\end{equation*}
Convergence of many classical greedy algorithms are analyzed in terms of $d_n(K)$, most notably the reduced basis greedy algorithm (see \cite{BinevCohenDahmenDeVore2011,DeVorePetrova2013,Wojtaszczyk2015}). As mentioned before, we shall follow an alternative way and analyze the convergence of the EIM and the CGA using the entropy numbers of ${\rm co}(K)$ (see~\cite{Lorentz1996}): \[
\varepsilon_n({\rm co}(K))=\varepsilon_n({\rm co}(K))_X:=\inf\{\varepsilon>0: {\rm co}(K)\text{ is covered by }2^n\text{ balls of radius } \varepsilon\}.
\]
The sequence $\{\varepsilon_n({\rm co}(K))\}_{n\geq0}$ describes the massiveness of ${\rm co}(K).$ 
It is well-known that $\lim_{n\to\infty}d_n(K)=0$ and $\lim_{n\to\infty}\varepsilon_n({\rm co}(K))=0$ for a compact $K.$ 

The metric entropy numbers is a fundamental concept in approximation, learning and probability theory \cite{Lorentz1996,Wainwright2019,SiegelXuFoCM}. For example, $\varepsilon_n({\rm co}(K))$ is the minimal distortion that one can achieve with $n$-bit encoding of ${\rm co}(K)$. Any numerical method  approximating each element of ${\rm co}(K)$ up to accuracy $\varepsilon_n({\rm co}(K))$ would require at least $n$ operations, and thus the entropy numbers  serve as a barrier for stable numerical convergence (cf.~\cite{DeVore2014,CohenDeVorePetrova2022}).  There are very useful comparisons between the decay rates of the entropy numbers and the Kolmogorov width or stable manifold width, see, e.g., Carl-type inequalities in  \cite{Carl1981,CohenDeVorePetrova2022}. Meanwhile, estimates of entropy numbers of important function sets have been established in, e.g., \cite{Lorentz1996,SiegelXuFoCM,LiLi2024REIM}. Therefore, we are motivated to develop error bounds of greedy algorithms in terms of entropy numbers.

\subsection{Reduced Basis Greedy Algorithm}\label{subsecRBGA}
In the analysis of RBMs, $K$ is the solution manifold of a parametrized PDE. Let $\mathcal{P}\subset\mathbb{R}^d$ be a compact set of parameters. For simplicity of presentation, we consider the boundary value problem for $\mu\in\mathcal{P}$:
\begin{subequations}\label{elliptic}
    \begin{align}
    -\nabla\cdot(a_\mu\nabla u_\mu)+b_\mu u_\mu&=F\quad\text{ in }\Omega,\\
    u_\mu&=0\quad\text{ on }\partial\Omega,
\end{align}
\end{subequations}
where $\Omega\subset\mathbb{R}^q$ is the physical domain, $F\in H^{-1}(\Omega)$, and 
\begin{align*}
    &0<\inf_{\mu\in\mathcal{P}}\inf_{\Omega}a_\mu\leq\sup_{\mu\in\mathcal{P}}\sup_\Omega a_\mu<\infty,\\
    &0<\inf_{\mu\in\mathcal{P}}\inf_{\Omega}b_\mu\leq\sup_{\mu\in\mathcal{P}}\sup_\Omega b_\mu<\infty.
\end{align*}
In this case, the solution manifold 
\begin{equation*}
    K=\{u_\mu\in H_0^1(\Omega): \mu\in\mathcal{P}\}
\end{equation*}
is also compact,
and the corresponding RBM approximately solves \eqref{elliptic} for many instances of the parameter $\mu$ in a low-dimensional subspace.
The abstract greedy  RBM is formulated in Algorithm \ref{WGA}.
\begin{algorithm}
\caption{Weak Reduced Basis Greedy Algorithm}\label{WGA}

Input: a compact set $K\subset X$, an integer $N$, and a sequence $\{\alpha_n\}\subset (0,1]$;

Initialization: set $X_0=\{0\}$;

\textbf{For}{$~n=1:N$}

$\qquad$Step 1: select $f_n\in K$ such that \begin{equation*}
    {\rm dist}(f_n,X_{n-1})\geq\alpha_n\sup_{f\in K}{\rm dist}(f,X_{n-1});
\end{equation*}
$\qquad$Step 2: set $X_n={\rm span}\{f_1,\ldots,f_n\}$; 

\textbf{EndFor}

Output: the subspace $X_N$ that approximates $K$.
\end{algorithm}
In RBMs for parametrized PDEs, the threshold sequence $\{\alpha_n\}$ in Algorithm \ref{WGA} is indeed constant, i.e., $\alpha_1=\cdots=\alpha_N$. The output $X_N$ is a low-dimensional subspace over which Galerkin solutions for many instances of the parameter are computed during the online stage of the RBM.
We refer to \cite{RozzaHuynhPatera2008,HesthavenRozzaStamm2016,BinevCohenDahmenDeVore2011,DeVorePetrova2013,LiSiegel2024,LiZikatanovZuo2024SISC} for more details about the implementation and analysis of RBMs. 

The efficiency of RBMs  relies on the affine structure of coefficients, e.g., when $a_\mu, b_\mu$ in \eqref{elliptic} are of the special form
\begin{align*}
    &a_\mu(x)=a_0(x)+\sum_{j=1}^{J}\theta_j(\mu)a_j(x),\\
    &b_\mu(x)=b_0(x)+\sum_{m=1}^M\omega_m(\mu)b_m(x).
\end{align*}
Such affine decomposition of $a_\mu, b_\mu$ is not necessarily available in real-world PDE models. To improve the applicability of RBMs, the works \cite{BarraultMadayNguyenPatera2004,EftangGreplPatera2010,MadayMula2013,NguyenPeraire2023,Nguyen2024} etc. have developed a variety of EIMs for approximating parameter-dependent coefficients, e.g., $\{a_\mu\}_{\mu\in\mathcal{P}}$ and $\{b_\mu\}_{\mu\in\mathcal{P}}$, by affinely separable functions uniformly with respect to $\mu\in\mathcal{P}$. In this paper, we consider  the generalized EIM (see \cite{BarraultMadayNguyenPatera2004,MadayMula2013,MadayMulaTurinici2016}) given in Algorithm \ref{EIM}.

\begin{algorithm}
\caption{Generalized Empirical Interpolation Method}\label{EIM}
Input: a compact set $K\subset X$, a collection $\mathcal{L}$ of bounded linear functionals acting on $\text{span}\{K\}$, and an integer $N\geq1$;

Initialization: let $X_0=\{0\}$, $\ell_0=0\in\mathcal{L}$ and $\Pi_0: X\rightarrow X_0$ be the zero map;

\textbf{For}{$~n=1:N$}

$\qquad$Step 1: select $f_n\in K$ such that \begin{equation*}
    \|r_n\|:=\|f_n-\Pi_{n-1}f_n\|=\max_{f\in K}\|f-\Pi_{n-1}f\|;
\end{equation*}
$\qquad$Step 2: construct $X_n={\rm span}\{f_1,\ldots,f_n\}$; 

$\qquad$Step 3: select $\ell_n\in\mathcal{L}$ such that 
\begin{equation*}
|\ell_n(r_n)|=\max_{\ell\in\mathcal{L}}|\ell(r_n)|;
\end{equation*} 
$\qquad$Step 4: let $g_n:=r_n/\ell_n(r_n)$ and $B_n=(\ell_i(g_j))_{1\leq i,j\leq n}$;

$\qquad$Step 5: define the interpolation $\Pi_n: \text{span}\{K\}\rightarrow X_n$ as
\begin{align*}
    &\Pi_nf=c_1g_1+\cdots+c_ng_n,\\
    &(c_1,\ldots,c_n)^\top:=B_n^{-1}(\ell_1(f),\ldots,\ell_n(f))^\top;
\end{align*}

\textbf{EndFor}

Output: the empirical interpolation $\Pi_N: \text{span}\{K\}\rightarrow X_N$.
\end{algorithm}

Under practical assumptions (see \cite{BarraultMadayNguyenPatera2004}), Algorithm \ref{EIM} is well-defined and satisfies the following: (i) $B_n$ is a lower triangular matrix with unit diagonal entries;  (ii) $g_1, \ldots, g_n$ is a basis of $X_n$; (iii) $\ell_1, \ldots, \ell_n$ is unisolvent on $X_n$, and thus $\Pi_n^2=\Pi_n$. Step 5 ensures that $\Pi_n$ is an interpolation based on the following degrees of freedom:
\begin{equation*}
    \ell_i(\Pi_nf)=\ell_i(f),\quad i=1,2,\ldots,n,\quad f\in K.
\end{equation*}
In the original work \cite{BarraultMadayNguyenPatera2004}, $X=L_\infty(\Omega)$ and $\mathcal{L}$ corresponds to function evaluations at a set of sample points $\{x_i\}_{1\leq i\leq L}$ in the domain $\Omega$, i.e., $\ell_{x_i}(f)=f(x_i)$ for each $\ell_{x_i}\in\mathcal{L}$. In practice, $K$ is a collection of functions smoothly parametrized by a compact set of parameters. 
Introducing basis functions $(h_1,\ldots,h_n):=(g_1,\ldots,g_n)B_n^{-1},$ the EIM interpolant has the explicit form
\begin{equation}\label{explicitEIM}
    \Pi_nf=\ell_1(f)h_1+\cdots+\ell_n(f)h_n.
\end{equation}

For the problem \eqref{elliptic}, the compact set $K$ in Algorithm \ref{EIM} is \begin{equation*}
    K=\{a_\mu\in L_\infty(\Omega): \mu\in\mathcal{P}\}\text{ or }K=\{b_\mu\in L_\infty(\Omega): \mu\in\mathcal{P}\}.
\end{equation*}
In this case, the coefficients $c_1, \ldots, c_n$ in Step 5 of Algorithm \ref{EIM} depends on $\mu$. The EIM interpolants of $a_\mu$ and $b_\mu$ in \eqref{elliptic} are of the form 
\begin{align*}
&(\Pi_Na_\mu)(x)=\sum_{i=1}^Nc_i(\mu)a_{\mu_i}(x),\\
&(\Pi_Nb_\mu)(x)=\sum_{i=1}^N\tilde{c}_i(\mu)b_{\mu_i}(x),  
\end{align*} 
where $\mu\in\mathcal{P}$ and $x\in\Omega$ are separated.
As a result, a RBM for the approximate  model
\begin{equation*}
\begin{aligned}
     -\nabla\cdot\big((\Pi_Na_\mu)\nabla \tilde{u}_\mu\big)+(\Pi_Nb_\mu)\tilde{u}_\mu&=F\quad\text{ in }\Omega,\\
     \tilde{u}_\mu&=0\quad\text{ on }\partial\Omega
\end{aligned}
\end{equation*}
is able to achieve much higher efficiency than a RBM directly applied to \eqref{elliptic}.

\subsection{Chebyshev Greedy Algorithm}
In the end of this section, we consider another class of greedy algorithms for dictionary approximation. Famous examples of these greedy algorithms include the projection pursuit in  statistical regression and the matching pursuit in compressed sensing. Among those algorithms, the OGAs (see Algorithm \ref{OGA}) in Hilbert spaces (also known as the orthogonal matching pursuit in compressed sensing) are perhaps the ones with fastest convergence rate.

\begin{algorithm}[thp]
\caption{Orthogonal Greedy Algorithm}\label{OGA}

Input: a Hilbert space $H$, a compact set $K\subset H$, a target function $f\in H,$ and an integer $N$;

Initialization: set $f_0=0\in H;$

\textbf{For}{$~n=1:N$}

$\qquad$Step 1: compute the optimizer {$$g_n=\arg\max_{g\in K}|\langle g,f-f_{n-1}\rangle_H|;$$}

$\qquad$Step 2: set $H_n={\rm span}\{g_1,\ldots,g_n\}$ and compute
\begin{equation*}
    f_n=P_{H_n}f,
\end{equation*} 
$\qquad$where  $P_{H_n}$ is the orthogonal projection onto $H_n$;

\textbf{EndFor}

Output: the iterate $f_N$ that approximates $f$.
\end{algorithm}
Convergence of the OGA have been analyzed in, e.g., \cite{BarronCohenDahmenDeVore2008,SiegelXu2022,LiSiegel2024}. In a Banach space, the CGA is a natural generalization of the OGA, see  \cite{Temlyakov2001,DereventsovTemlyakov2019}. Compared with the OGA, greedy approximation in non-Hilbert spaces like $L_p$ with $p<2$ is more robust with respect to noise, see \cite{HansonBurr1988,Donahue1997}. In \cite{Temlyakov2014}, the CGA is also a theoretical tool for proving Lebesgue-type inequalities.

For an element $g\in X$, its \emph{peak functional} $F_g$ (see \cite{Donahue1997,Temlyakov2001}) is a bounded linear functional in $X^\prime$ that satisfies
\begin{equation}\label{peak}
    \|F_g\|_{X^\prime}=1,\quad F_g(g)=\|g\|.
\end{equation}
The existence of $F_g$ is ensured by the Hahn--Banach extension theorem, see also Subsection \ref{subsec:NumCGA} for an explicit example of the peak functional $F_g$ of $g\in X=L_p(\Omega)$.
The weak version of the CGA (see \cite{Temlyakov2001}) is described in Algorithm \ref{CGA}.

\begin{algorithm}[thp]
\caption{Weak Chebyshev Greedy Algorithm}\label{CGA}

Input: a compact set $K\subset X$, a target function $f\in X$, an integer $N$, and a sequence $\{\alpha_n\}_{n\geq1}\subset(0,1]$;

Initialization: set $f_0=0\in X$ and $r_0=f-f_0=f$;

\textbf{For}{$~n=1:N$}

$\qquad$Step 1: construct a peak functional $F_{r_{n-1}}\in X^\prime$ for $r_{n-1}$;

$\qquad$Step 2: select $g_n\in K$ such that 
\begin{equation*}
|F_{r_{n-1}}(g_n)|\geq\alpha_n\sup_{g\in K}|F_{r_{n-1}}(g)|;
\end{equation*}
$\qquad$Step 3: set $X_n={\rm span}\{g_1,\ldots,g_n\}$; 

$\qquad$Step 4: select $f_n\in X_n$ such that 
\begin{equation*}
    \|r_n\|:=\|f-f_n\|=\inf_{g\in X_n}\|f-g\|;
\end{equation*}

\textbf{EndFor}

Output: the iterate $f_N$ that approximates $f$.
\end{algorithm}
When $\alpha_1=\cdots=\alpha_N=1$ and $X$ is a Hilbert space, the weak CGA reduces to the OGA in Algorithm \ref{OGA}. Rigorously speaking, Algorithms \ref{OGA} and \ref{CGA} are not implementable when the dictionary $K$ consists of infinitely many elements. In this case, $K$ would be replaced with a discrete finite subset in those greedy algorithms.


\section{Convergence of the EIM}\label{secEIM}
In this section, we derive a novel convergence estimate for the generalized EIM {(Algorithm \ref{EIM})}. We start with the norm or the Lebesgue constant of the interpolation $\Pi_n$ defined by 
\begin{equation*}
     \Lambda_n=\|\Pi_n\|_{K^\prime}:=\sup_{0\neq g\in\text{span}\{K\}}\|\Pi_ng\|/\|g\|.
\end{equation*}
For any $f\in K$, it holds that
\begin{equation}
    \inf_{g\in X_n}\|f-g\|\leq\|f-\Pi_nf\|.
\end{equation} 
On the contrary, for any $g\in X_n$ we have $\|f-\Pi_nf\|=\|(I-\Pi_n)(f-g)\|$ and obtain
\begin{equation}\label{Pinf}
        \|f-\Pi_nf\|\leq\gamma_n\inf_{g\in X_n}\|f-g\|,\quad f\in K.
\end{equation} 
The constant $\gamma_n:=\|I-\Pi_n\|_{K^\prime}$ is usually bounded as
\begin{equation}\label{gamman}
\gamma_n\leq1+\|\Pi_n\|_{K^\prime}=1+\Lambda_n.
\end{equation}
The formula \eqref{explicitEIM} implies the upper bound $\Lambda_n\leq\big(\max_{1\leq i\leq n}\|\ell_i\|_{K^\prime}\big)\sum_{j=1}^n\|h_j\|$.
Then the iterate $f_n$ in Step 2 of Algorithm \ref{EIM} satisfies 
\begin{equation}\label{EIMfn}
    \inf_{g\in X_{n-1}}\|f_n-g\|={\rm dist}(f_n,X_{n-1})\geq\gamma_{n-1}^{-1}\sup_{f\in K}{\rm dist}(f,X_{n-1}),
\end{equation}
the same as the selection criterion in the weak greedy RBM (Algorithm \ref{WGA}).

\subsection{Error Bounds of EIMs in Banach Spaces}

Let $\ell_2^n$ denote the Euclidean space $\mathbb{R}^n$ under the  $\ell_2$-norm. Our analysis utilizes the following quantity (see 
 Section III.B. of \cite{Wojtaszscyk1991})
\begin{equation*}
    \begin{aligned}
\delta_n=\delta_{X,n}:=\sup_{\substack{Y_n\subset X\\\dim Y_n=n}}\inf_{T: Y_n\rightarrow\ell_2^n}\|T\|_{Y_n\rightarrow\ell_2^n}\|T^{-1}\|_{\ell_2^n\rightarrow Y_n},
    \end{aligned}
\end{equation*}
where the infimum is taken over all isomorphism $T$ from the Banach space $Y_n$ to $\ell_2^n$. In fact, $\inf_{T: Y_n\rightarrow\ell_2^n}\|T\|_{Y_n\rightarrow\ell_2^n}\|T^{-1}\|_{\ell_2^n\rightarrow Y_n}$ is called the Banach--Mazur distance between $Y_n$ and $\ell_2^n$. Our analysis of the EIM and the CGA relies on the next fundamental 
lemma, which was developed in \cite{LiSiegel2024}.
\begin{lemma}[Lemma 3.1 from \cite{LiSiegel2024}]\label{mainlemma}
Let $K$ be a compact set in a Banach space $X$. Let $V_n=\pi^{\frac{n}{2}}/\Gamma(\frac{n}{2}+1)$ be the volume of an $\ell_2$ unit ball in $\mathbb{R}^n$. For any $v_1, \ldots, v_n\in K$ with $X_k={\rm span}\{v_1,\ldots,v_k\}$ and $X_0=\{0\}$, we have
\begin{equation*}
    \Big(\prod_{k=1}^n{\rm dist}(v_k,X_{k-1})\Big)^\frac{1}{n}\leq\delta_{X,n}(n!V_n)^{\frac{1}{n}}\varepsilon_n({\rm co}(K)).
\end{equation*}
\end{lemma}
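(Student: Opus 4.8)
The plan is to bound the geometric mean of the distances $\mathrm{dist}(v_k,X_{k-1})$ by comparing a volume computed in $X_n:=\mathrm{span}\{v_1,\dots,v_n\}$ with its Euclidean counterpart, and then to control that Euclidean volume through a covering of $\mathrm{co}(K)$. First I would fix $n$ and work inside the finite-dimensional space $X_n$. Consider the parallelepiped (or simplex) $P$ spanned by $v_1,\dots,v_n$; since each $v_k\in K\subset\mathrm{co}(K)$, a suitable scalar multiple of $P$ lies in $\mathrm{co}(K)$, so the volume of $P$ is controlled by the volume of $\mathrm{co}(K)\cap X_n$. The key geometric identity is that the volume of the parallelepiped spanned by $v_1,\dots,v_n$ equals $\prod_{k=1}^n \mathrm{dist}(v_k,X_{k-1})$ when volume is measured with respect to an orthonormal (Euclidean) structure on $X_n$: expanding the Gram-type determinant by the Gram--Schmidt process gives exactly this product of successive distances to the previously spanned subspaces.

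Next I would transfer between the Banach norm on $X_n$ and a Euclidean norm on it. By definition of $\delta_{X,n}$, there is an isomorphism $T:X_n\to\ell_2^n$ with $\|T\|\,\|T^{-1}\|\le\delta_{X,n}$ (up to an arbitrarily small slack, which one lets tend to zero at the end). The Banach ball $\mathrm{co}(K)\cap X_n$ is sandwiched between Euclidean balls whose radii differ by the factor $\|T\|\,\|T^{-1}\|$, so a volume comparison in $\ell_2^n$ costs a factor $(\delta_{X,n})^n$ overall. Meanwhile the entropy condition says $\mathrm{co}(K)$ — hence $\mathrm{co}(K)\cap X_n$ — is covered by $2^n$ balls of radius $\varepsilon_n(\mathrm{co}(K))$ (plus slack), so its $n$-dimensional volume is at most $2^n V_n\big(\varepsilon_n(\mathrm{co}(K))\big)^n$, where $V_n$ is the volume of the Euclidean unit ball. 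Combining: the Euclidean volume of the parallelepiped spanned by $v_1,\dots,v_n$ is at most (a scalar factor from $\mathrm{co}(K)$ containing a simplex rather than the full parallelepiped, i.e.\ an $n!$) times $(\delta_{X,n})^n 2^n V_n \big(\varepsilon_n(\mathrm{co}(K))\big)^n$. Taking $n$-th roots and absorbing the $2^n$ into the constant (note $(n!\,2^n V_n)^{1/n}$ and $(n!\,V_n)^{1/n}$ agree up to a bounded factor, or one tracks the simplex/parallelepiped relation carefully to land exactly on $(n!V_n)^{1/n}$) yields
\[
\Big(\prod_{k=1}^n \mathrm{dist}(v_k,X_{k-1})\Big)^{1/n}\le \delta_{X,n}\,(n!V_n)^{1/n}\,\varepsilon_n(\mathrm{co}(K)),
\]
after sending the isomorphism slack to zero.

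The step I expect to be the main obstacle is the clean volume bookkeeping: making sure the constant that emerges is exactly $(n!V_n)^{1/n}$ rather than something slightly larger, and handling the fact that $\mathrm{co}(K)$ contains the \emph{symmetric convex hull} of $\{v_1,\dots,v_n\}$, which is a cross-polytope-type body of volume $2^n\det(\cdot)/n!$ in the $v_i$-coordinates, not the parallelepiped itself. One must also be slightly careful that $X_n$ could have dimension less than $n$ (if the $v_k$ are dependent), in which case $\mathrm{dist}(v_k,X_{k-1})=0$ for some $k$ and the inequality is trivial, so WLOG the $v_k$ are independent and $\dim X_n=n$. A secondary technical point is that $\delta_{X,n}$ is defined as a supremum over all $n$-dimensional subspaces, so it indeed applies to our particular $X_n$, and the infimum over isomorphisms $T$ is attained or approached, justifying the slack argument. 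None of these is deep; the content is the identification "product of successive distances $=$ parallelepiped volume" together with the two volume estimates (Banach-to-Euclidean distortion and entropy covering).
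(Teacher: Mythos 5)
The paper does not prove this lemma itself (it is quoted from \cite{LiSiegel2024}), but your proposal follows essentially the same route as the proof given there: compare the Lebesgue volume of the symmetric convex hull of $v_1,\dots,v_n$ (a cross-polytope of volume $\tfrac{2^n}{n!}\prod_k \mathrm{dist}(v_k,X_{k-1})$ in suitable coordinates) with the volume of $\mathrm{co}(K)\cap X_n$ obtained from the $2^n$-ball covering, transferring between the Banach and Euclidean structures via a near-optimal isomorphism $T:X_n\to\ell_2^n$. Your final paragraph correctly identifies the two points that make the constant come out exactly as $(n!V_n)^{1/n}$: it is the cross-polytope, not the parallelepiped, that sits inside $\mathrm{co}(K)$, and its factor $2^n/n!$ cancels the $2^n$ from the covering.

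Two steps remain genuinely unfinished as written. First, the per-ball volume bound: the covering balls $B(c_i,\varepsilon)$ of $\mathrm{co}(K)$ need not have centers in $X_n$, so $B(c_i,\varepsilon)\cap X_n$ is merely a convex set of \emph{diameter} at most $2\varepsilon$, not a ball of radius $\varepsilon$ in $X_n$; re-centering costs a factor $2$ per dimension, i.e.\ an extra $2$ in the final estimate. The tool that removes it is the isodiametric inequality in $\mathbb{R}^n$: after applying $T$, each piece has $\ell_2$-diameter at most $2\|T\|\varepsilon$ and hence Lebesgue volume at most $V_n(\|T\|\varepsilon)^n$. Your sketch never invokes this, and without it you do not land on $(n!V_n)^{1/n}$. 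Second, the distortion bookkeeping should be one-sided: normalize $T$ so that $\|T^{-1}\|_{\ell_2^n\to X_n}=1$ and $\|T\|\le\delta_{X,n}+\eta$; then $\mathrm{dist}_{\ell_2}(Tv_k,TX_{k-1})\ge\mathrm{dist}_X(v_k,X_{k-1})$, so the Gram--Schmidt identity gives a \emph{lower} bound $\prod_k\mathrm{dist}_X(v_k,X_{k-1})$ for the determinant with no loss, and the single factor $\delta_{X,n}^n$ enters only through $\|T\|^n$ in the covering bound. Your phrasing (``the volume of the parallelepiped \emph{equals} $\prod_k\mathrm{dist}(v_k,X_{k-1})$'' and ``sandwiched between Euclidean balls'') conflates the Banach and Euclidean distances and, taken literally, would double-count the distortion. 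Both repairs are routine, and the degenerate case $\dim X_n<n$ is handled exactly as you say.
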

Now we are in a position to present our convergence estimates of the generalized EIM.
\begin{theorem}\label{EIMthm}
For the weak greedy RBM (Algorithm \ref{WGA}), we have
\begin{equation*}
    \sup_{f\in K}{\rm dist}(f,X_{n-1})\leq(\alpha_1\cdots\alpha_n)^{-\frac{1}{n}}{\delta}_{X,n}(n!V_n)^\frac{1}{n}\varepsilon_n({\rm co}(K)).
\end{equation*}
In particular, the generalized EIM (Algorithm \ref{EIM}) satisfies
\begin{equation*}
    \sup_{f\in K}\|f-\Pi_{n-1}f\|\leq\gamma_{n-1}(\gamma_1\cdots\gamma_{n-1})^{\frac{1}{n}}{\delta}_{X,n}(n!V_n)^\frac{1}{n}\varepsilon_n({\rm co}(K)).
\end{equation*}
\end{theorem}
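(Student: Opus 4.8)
The plan is to prove the two assertions in sequence, with the first one (the weak greedy RBM bound) doing essentially all the work and the EIM bound following by the reduction already recorded in \eqref{EIMfn}. For the weak greedy RBM, fix $n$ and let $f_1,\dots,f_n\in K$ be the functions selected in the first $n$ iterations of Algorithm \ref{WGA}, with $X_k={\rm span}\{f_1,\dots,f_k\}$. The defining selection property of Step 1 gives, for each $k=1,\dots,n$,
\begin{equation*}
{\rm dist}(f_k,X_{k-1})\geq\alpha_k\sup_{f\in K}{\rm dist}(f,X_{k-1}).
\end{equation*}
The key structural observation is that the sequence $\sigma_k:=\sup_{f\in K}{\rm dist}(f,X_{k-1})$ is nonincreasing in $k$, because $X_{k-1}\subseteq X_k$ forces ${\rm dist}(f,X_k)\leq{\rm dist}(f,X_{k-1})$ for every $f$. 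Hence $\sigma_k\geq\sigma_n$ for all $k\leq n$, and combining with the selection property yields ${\rm dist}(f_k,X_{k-1})\geq\alpha_k\sigma_n$. Taking the product over $k=1,\dots,n$ and then the $n$-th root gives
\begin{equation*}
\Big(\prod_{k=1}^n{\rm dist}(f_k,X_{k-1})\Big)^{\frac1n}\geq(\alpha_1\cdots\alpha_n)^{\frac1n}\,\sigma_n.
\end{equation*}

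Now I apply Lemma \ref{mainlemma} to the vectors $v_k=f_k\in K$: the left-hand side above is bounded above by $\delta_{X,n}(n!V_n)^{\frac1n}\varepsilon_n({\rm co}(K))$. Chaining the two inequalities and solving for $\sigma_n=\sup_{f\in K}{\rm dist}(f,X_{n-1})$ produces exactly the claimed estimate
\begin{equation*}
\sup_{f\in K}{\rm dist}(f,X_{n-1})\leq(\alpha_1\cdots\alpha_n)^{-\frac1n}\delta_{X,n}(n!V_n)^{\frac1n}\varepsilon_n({\rm co}(K)).
\end{equation*}

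For the EIM statement, the point is that Algorithm \ref{EIM} is an instance of the weak greedy RBM with a specific (iteration-dependent) threshold sequence. Indeed, \eqref{EIMfn} shows that the selected $f_n$ satisfies ${\rm dist}(f_n,X_{n-1})\geq\gamma_{n-1}^{-1}\sup_{f\in K}{\rm dist}(f,X_{n-1})$, i.e., Step 1 of Algorithm \ref{WGA} holds with $\alpha_k=\gamma_{k-1}^{-1}$ for $k\geq 1$ (reading $\gamma_0=1$, consistent with $\Pi_0=0$). Substituting $\alpha_1\cdots\alpha_n=(\gamma_0\gamma_1\cdots\gamma_{n-1})^{-1}=(\gamma_1\cdots\gamma_{n-1})^{-1}$ into the first bound gives
\begin{equation*}
\sup_{f\in K}{\rm dist}(f,X_{n-1})\leq(\gamma_1\cdots\gamma_{n-1})^{\frac1n}\delta_{X,n}(n!V_n)^{\frac1n}\varepsilon_n({\rm co}(K)).
\end{equation*}
Finally I invoke \eqref{Pinf}, namely $\|f-\Pi_{n-1}f\|\leq\gamma_{n-1}\inf_{g\in X_{n-1}}\|f-g\|=\gamma_{n-1}\,{\rm dist}(f,X_{n-1})$, and take the supremum over $f\in K$ to arrive at
\begin{equation*}
\sup_{f\in K}\|f-\Pi_{n-1}f\|\leq\gamma_{n-1}(\gamma_1\cdots\gamma_{n-1})^{\frac1n}\delta_{X,n}(n!V_n)^{\frac1n}\varepsilon_n({\rm co}(K)),
\end{equation*}
which is the second claim. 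The only place that requires genuine care — and the step I expect to be the main obstacle in writing it cleanly — is the monotonicity argument $\sigma_k\geq\sigma_n$ and its interaction with the weak thresholds: one must make sure the greedy distances ${\rm dist}(f_k,X_{k-1})$ are lower-bounded by a common quantity $\sigma_n$ (rather than the iteration-varying $\sigma_k$) before multiplying, since otherwise the telescoped product would not factor through Lemma \ref{mainlemma} in the stated form. Everything else is bookkeeping: matching $\alpha_k$ to $\gamma_{k-1}^{-1}$, and the passage from the width bound to the interpolation-error bound via \eqref{Pinf}.
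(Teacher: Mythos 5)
Your argument is correct and is essentially identical to the paper's proof: the paper also bounds $\sup_{f\in K}{\rm dist}(f,X_{n-1})$ by the geometric mean $\big(\prod_{k=1}^n\sup_{f\in K}{\rm dist}(f,X_{k-1})\big)^{1/n}$ (which is exactly your monotonicity observation $\sigma_n\leq\sigma_k$), inserts the weak-greedy selection inequality, and applies Lemma \ref{mainlemma}, then reduces the EIM case to the RBM case via \eqref{EIMfn} with $\alpha_k=\gamma_{k-1}^{-1}$, $\gamma_0=1$, and \eqref{Pinf}. The only difference is cosmetic: you run the chain of inequalities from the product downward to $\sigma_n$ rather than from $\sigma_n$ upward to the product.
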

\begin{proof}
First we focus on Algorithm \ref{WGA}. It follows from Step 1 of Algorithm \ref{WGA} and Lemma \ref{mainlemma} with $v_k=f_k$ in Algorithm \ref{WGA} that 
\begin{equation*}
    \begin{aligned}
       &\sup_{f\in K}{\rm dist}(f,X_{n-1})\leq\Big(\prod_{k=1}^n\sup_{f\in K}{\rm dist}(f,X_{k-1})\Big)^\frac{1}{n}\\
       &\leq\Big(\prod_{k=1}^n\alpha_k^{-1}{\rm dist}(f_k,X_{k-1})\Big)^\frac{1}{n}\\
       &\leq(\alpha_1\cdots\alpha_n)^{-\frac{1}{n}}\delta_n(n!V_n)^\frac{1}{n}\varepsilon_n({\rm co}(K)).
    \end{aligned}
\end{equation*}
Next we note that $\{f_n\}_{n\geq1}$ in Algorithm \ref{EIM} satisfies \eqref{EIMfn}, the same selection criterion in the weak greedy RBM (Algorithm \ref{WGA}) with $\alpha_n=\gamma_{n-1}^{-1}$ and $\gamma_0=1$. Combining \eqref{Pinf}
and the first part of this theorem completes the proof. 
\end{proof}

It is easy to see that $\delta_{X,n}=1$ provided $X$ is a Hilbert space. For Banach spaces and Sobolev spaces $W^k_p(\Omega)$, it has been proven in Section III.B.9 of \cite{Wojtaszscyk1991} that  
\begin{equation}\label{deltan}
    {\delta}_{X,n}\leq
\left\{\begin{aligned}
    &\sqrt{n},\quad\text{ when $X$ is a general Banach space},\\
    &n^{|\frac{1}{2}-\frac{1}{p}|},\quad\text{when $X=W^k_p(\Omega)$ with $p\in[1,\infty]$},
\end{aligned}\right.
\end{equation}
where the convention $L_p(\Omega)=W^0_p(\Omega)$ is adopted. 
With the help of \eqref{deltan}, several important asymptotic consequences of Theorem \ref{EIMthm} are summarized in the next corollary.
\begin{corollary}\label{EIMcor}
There exists an absolute constant $C_0>0$ independent of $n$, $K$ and $X$ such that: (1) for a general Banach space $X$, we have 
\begin{equation}\label{EIMerr1}
        \sup_{f\in K}\|f-\Pi_{n-1}f\|\leq C_0(1+\Lambda_{n-1})\left(\prod_{k=1}^{n-1}(1+\Lambda_k)\right)^{\frac{1}{n}}n\varepsilon_n({\rm co}(K));
\end{equation}
(2) when $X=W^k_p(\Omega)$ with $p\in[1,\infty]$, 
it holds that
\begin{equation}\label{EIMerr2}
        \sup_{f\in K}\|f-\Pi_{n-1}f\|\leq C_0(1+\Lambda_{n-1})\left(\prod_{k=1}^{n-1}(1+\Lambda_k)\right)^{\frac{1}{n}}n^{\frac{1}{2}+|\frac{1}{2}-\frac{1}{p}|}\varepsilon_n({\rm co}(K));
\end{equation}
(3) when $X$ is a Hilbert space, it holds that
\begin{equation}\label{EIMerr3}
        \sup_{f\in K}\|f-\Pi_{n-1}f\|\leq C_0\Lambda_{n-1}\left(\Lambda_1\cdots\Lambda_{n-1}\right)^{\frac{1}{n}}n^\frac{1}{2}\varepsilon_n({\rm co}(K)).
\end{equation}
\end{corollary}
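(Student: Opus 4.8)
The plan is to derive all three estimates directly from Theorem~\ref{EIMthm} by controlling, one at a time, the three $n$-dependent factors on its right-hand side for the EIM: the Lebesgue-type product $\gamma_{n-1}(\gamma_1\cdots\gamma_{n-1})^{1/n}$, the Banach--Mazur factor $\delta_{X,n}$, and the geometric factor $(n!V_n)^{1/n}$. The first factor is handled by \eqref{gamman}: since $\gamma_k\le 1+\Lambda_k$, we get $\gamma_{n-1}(\gamma_1\cdots\gamma_{n-1})^{1/n}\le(1+\Lambda_{n-1})\bigl(\prod_{k=1}^{n-1}(1+\Lambda_k)\bigr)^{1/n}$. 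The second factor is handled by the bounds \eqref{deltan}. Thus the only step that is not a direct substitution is a uniform estimate of $(n!V_n)^{1/n}$, which I take up next.

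I would show that $(n!V_n)^{1/n}\le C_0\sqrt n$ for an absolute constant $C_0$ independent of $n$, $K$ and $X$. Writing $n!V_n=\pi^{n/2}\,n!/\Gamma(\tfrac n2+1)$ and using the Stirling lower bound $\Gamma(x+1)\ge\sqrt{2\pi x}\,(x/e)^x$ with $x=n/2$ gives $V_n^{1/n}\le(2\pi e/n)^{1/2}(\pi n)^{-1/(2n)}\le(2\pi e)^{1/2}n^{-1/2}$, and combining with the crude bound $(n!)^{1/n}\le n$ yields $(n!V_n)^{1/n}\le(2\pi e)^{1/2}\sqrt n$ (the case $n=1$ being immediate). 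Since $\delta_{X,n}$ enters \eqref{deltan} with coefficient $1$, the constant $C_0=\sqrt{2\pi e}$ is genuinely absolute. Then case~(1) follows by inserting $\delta_{X,n}\le\sqrt n$, so that $\delta_{X,n}(n!V_n)^{1/n}\le C_0 n$, and case~(2) follows by inserting $\delta_{X,n}\le n^{|1/2-1/p|}$, so that $\delta_{X,n}(n!V_n)^{1/n}\le C_0 n^{1/2+|1/2-1/p|}$.

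For case~(3), $X$ a Hilbert space, we have $\delta_{X,n}=1$, so the geometric factor alone contributes $C_0\sqrt n$; the remaining point is to sharpen $\gamma_k$ from $1+\Lambda_k$ to $\Lambda_k$. Recall that each $\Pi_n$ is idempotent with range $X_n$, so $I-\Pi_n$ is the complementary projection, and in a Hilbert space the classical identity $\|I-\Pi_n\|=\|\Pi_n\|$ holds whenever $0\ne\Pi_n\ne I$; hence $\gamma_k=\Lambda_k$ for $1\le k\le n-1$, and substituting into Theorem~\ref{EIMthm} gives \eqref{EIMerr3} for $n\ge2$ (the case $n=1$ being trivial since $\Pi_0=0$). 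The only ingredients requiring any care, and the ones I would write out, are the Stirling estimate for $(n!V_n)^{1/n}$ and this norm identity for complementary oblique projections; the rest is a mechanical combination of Theorem~\ref{EIMthm} with \eqref{gamman} and \eqref{deltan}.
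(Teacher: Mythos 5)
Your proof is correct and follows essentially the same route as the paper: combine Theorem~\ref{EIMthm} with $\gamma_k\le 1+\Lambda_k$ from \eqref{gamman}, the Banach--Mazur bounds \eqref{deltan}, a Stirling-type bound giving $(n!V_n)^{1/n}\le C_0\sqrt{n}$, and, for the Hilbert case, the Xu--Zikatanov identity $\|I-\Pi_n\|_{K'}=\|\Pi_n\|_{K'}$ for the idempotent $\Pi_n$. The only (harmless) difference is that you make the absolute constant explicit ($C_0=\sqrt{2\pi e}$) via a non-asymptotic Stirling lower bound for $\Gamma(\tfrac n2+1)$, where the paper simply cites the asymptotic formulae for $n!$ and $V_n$.
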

\begin{proof}
The estimates \eqref{EIMerr1} and \eqref{EIMerr2} follow from Theorem \ref{EIMthm}, the simple bound $\gamma_n\leq1+\Lambda_n$, \eqref{deltan} and the well-known formulae
\begin{equation*}
    \lim_{n\to\infty}n!\left/\sqrt{2\pi n}\Big(\frac{n}{\text{e}}\Big)^n\right.=1,\quad\lim_{n\to\infty}V_n\left/\frac{1}{\sqrt{n\pi}}\Big(\frac{2\pi\text{e}}{n}\Big)^\frac{n}{2}\right.=1.
\end{equation*}
The third estimate \eqref{EIMerr3} is a result of Theorem \ref{EIMthm} and the classical identity $$\gamma_n=\|I-\Pi_n\|_{K^\prime}=\|\Pi_n\|_{K^\prime}=\Lambda_n$$
(see \cite{XuZikatanov2003}) for the idempotent ($\Pi_n^2=\Pi_n$) operator $\Pi_n$ in an inner-product space. 
\end{proof}

\subsection{Convergent Examples of the EIM}\label{subsec:EIMexample}
Recall that $\mathcal{P}$ is a compact set in $\mathbb{R}^d$. In practice, the EIM is applied to the parametrized function set $$K=\{g_\mu\in X: \mu\in\mathcal{P}\subset\mathbb{R}^d\}.$$
When the parametrization $\mu\mapsto g_\mu$ is of smoothness order $\alpha>0$ (see \cite{SiegelXuFoCM} for the accurate definition), it is known that 
\begin{equation}\label{dnKbound}
    d_n(K)\leq Cn^{-\frac{\alpha}{d}}.
\end{equation}
In addition, it has been shown in \cite{SiegelXuFoCM} that 
\begin{equation}\label{epsilonKbound}
   \varepsilon_n({\rm co}(K))\leq C\left\{\begin{aligned}
    &n^{-\frac{\alpha}{d}},&&\text{ if $X$ is a general Banach space},\\
    &n^{-\frac{1}{2}-\frac{\alpha}{d}},&&\text{ if $X$ is a type-2 Banach space}.
\end{aligned}\right.
\end{equation}
Type-2 Banach spaces include the common Sobolev spaces $X=W^k_p(\Omega)$ with $2\leq p<\infty$.
Under the condition \eqref{dnKbound} and the assumption that $\{\Lambda_n\}_{n\geq1}$ is monotonically increasing, the classical $n$-width based result \eqref{classicalEIM} from \cite{MadayMulaTurinici2016} yields
\begin{equation}\label{classicalEIMWkp}
    \sup_{f\in K}\|f-\Pi_nf\|\leq C(1+\Lambda_n)^3n^{-\frac{\alpha}{d}+1}.
\end{equation}
However, it seems hard to rigorously prove the monotonicity of $\{\Lambda_n\}_{n\geq1}$. On the other hand, without any assumption on $\{\Lambda_n\}_{n\geq1}$, a combination of \eqref{epsilonKbound} and \eqref{EIMerr1} yields the unconditional entropy-based error estimate
\begin{equation}\label{ourEIMWkp}
\sup_{f\in K}\|f-\Pi_{n-1}f\|\leq C(1+\Lambda_{n-1})\left(\prod_{k=1}^{n-1}(1+\Lambda_k)\right)^{\frac{1}{n}}n^{-\frac{\alpha}{d}+1},
\end{equation} 
where the multiplicative constant in the error bound depends on $\{\Lambda_k\}_{1\leq k\leq n-1}$ in a weaker way than the classical result \eqref{classicalEIMWkp}. 
For $X=W^k_p(\Omega)$ with $2\leq p<\infty$, it follows from \eqref{epsilonKbound} and Corollary \ref{EIMcor} that 
\begin{equation}
\sup_{f\in K}\|f-\Pi_{n-1}f\|\leq C(1+\Lambda_{n-1})\left(\prod_{k=1}^{n-1}(1+\Lambda_k)\right)^{\frac{1}{n}}n^{-\frac{\alpha}{d}+\frac{1}{2}-\frac{1}{p}},
\end{equation} improving the existing  convergence rate  \eqref{classicalEIMWkp} in many Sobolev spaces.
Compared with classical error analysis, our error estimate \eqref{ourEIMWkp} depends only mildly on the Lebesgue constant $\Lambda_n$. For common applications,
numerical evidence (see \cite{MadayNguyenPateraPau2009}) shows that $\Lambda_n=O(n^\beta)$ with $\beta>0$ and thus confirms another advantage of \eqref{ourEIMWkp} over classical error bound \eqref{classicalEIMWkp}.

We remark that the upper bound \eqref{gamman} for $\gamma_n$ turns out to be improvable in many Banach spaces. In fact, the work \cite{Stern2015} generalized the identity in \cite{XuZikatanov2003} from Hilbert spaces to Banach spaces, see the next lemma.
\begin{lemma}[Theorem 3 from \cite{Stern2015}]\label{XZBanach}
In a normed space $Y$, let $P: Y\rightarrow Z\subseteq Y$ be a linear projection ($P^2=P$) onto a finite-dimensional subspace $Z$. Then we have
   \begin{equation*}
       \|I-P\|_{Y\rightarrow Z}\leq\min\big(1+\|P\|_{Y\rightarrow Z}^{-1}, \delta_{Y,2}^2\big)\|P\|_{Y\rightarrow Z}.
   \end{equation*}
\end{lemma}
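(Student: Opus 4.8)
\emph{Proof plan.} The strategy is to prove the two bounds $\|I-P\|\le(1+\|P\|^{-1})\|P\|$ and $\|I-P\|\le\delta_{Y,2}^2\|P\|$ separately and then take the minimum. The first is nothing more than the triangle inequality: $\|I-P\|\le1+\|P\|=(1+\|P\|^{-1})\|P\|$, where it is harmless to assume $P\neq0$ (so that $Z\neq\{0\}$, $\|P\|\ge1$, and $\|P\|^{-1}$ is meaningful; the case $P=I$ is trivial since then $\|I-P\|=0$). The substantive assertion is $\|I-P\|\le\delta_{Y,2}^2\|P\|$, which reduces to the classical Kato/Xu--Zikatanov identity $\|I-P\|=\|P\|$ when $Y$ is an inner-product space, because then $\delta_{Y,2}=1$.

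First I would record the splitting $Y=Z\oplus W$ with $W:=\ker P$, so that every $y\in Y$ is uniquely $y=Py+(I-P)y=z+w$ with $z\in Z$, $w\in W$. Consequently $\|P\|=\sup\{\|z\|/\|z+w\|:z\in Z,\ w\in W,\ z+w\neq0\}$ and likewise $\|I-P\|=\sup\{\|w\|/\|z+w\|:z\in Z,\ w\in W,\ z+w\neq0\}$, so it suffices to bound $\|w\|/\|z+w\|$ for a single admissible pair $(z,w)$. If $w=0$ the ratio is $0$; if $z=0$ it equals $1\le\|P\|$; hence I may assume $z,w\neq0$, and since $Z\cap W=\{0\}$ the set $\{z,w\}$ is linearly independent, so $U:=\operatorname{span}\{z,w\}$ is genuinely two-dimensional.

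The key step is to transport the estimate to $\ell_2^2$. Pick an isomorphism $T:U\to\ell_2^2$ that realizes the Banach--Mazur distance $d(U,\ell_2^2)\le\delta_{Y,2}$ (attained by a compactness argument in finite dimensions, or approached up to an arbitrarily small slack), and set $\tilde z=Tz$, $\tilde w=Tw$. Let $\tilde Q$ be the oblique projection in $\ell_2^2$ onto $\mathbb{R}\tilde z$ along $\mathbb{R}\tilde w$. Writing $z+w=T^{-1}(\tilde z+\tilde w)$ and $w=T^{-1}(I-\tilde Q)(\tilde z+\tilde w)$ and using the operator-norm bounds for $T$ and $T^{-1}$ yields
\[
\frac{\|w\|}{\|z+w\|}\le\|T\|\,\|T^{-1}\|\,\|I-\tilde Q\|_{\ell_2^2}.
\]
Now I would invoke the two-dimensional Kato identity $\|I-\tilde Q\|_{\ell_2^2}=\|\tilde Q\|_{\ell_2^2}$ (elementary trigonometry: both equal $1/\sin\theta$ for $\theta$ the angle between the two lines; only the inequality $\|I-\tilde Q\|_{\ell_2^2}\le\|\tilde Q\|_{\ell_2^2}$ is actually needed, and it also follows from the symmetry of the roles of the two lines). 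Finally I would bound $\|\tilde Q\|_{\ell_2^2}$ back in terms of $\|P\|$: any $v\in\ell_2^2$ has the form $v=a\tilde z+b\tilde w=T(az+bw)$ with $az\in Z$ and $bw\in W$, so $\|\tilde Qv\|_2=\|T(az)\|_2\le\|T\|\,\|az\|\le\|T\|\,\|P\|\,\|az+bw\|\le\|T\|\,\|T^{-1}\|\,\|P\|\,\|v\|_2$, i.e. $\|\tilde Q\|_{\ell_2^2}\le\|T\|\,\|T^{-1}\|\,\|P\|$. Chaining the three estimates gives $\|w\|/\|z+w\|\le(\|T\|\,\|T^{-1}\|)^2\|P\|\le\delta_{Y,2}^2\|P\|$, and taking the supremum over all admissible $(z,w)$ completes the proof.

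The only delicate point I anticipate is the passage through the two-dimensional model space: one must be careful that a single near-optimal $T$ on the specific plane $U$ serves simultaneously for transferring the norm of $w$ downward and for transferring the projection estimate back upward, which is why the factor $\|T\|\|T^{-1}\|$ appears squared and hence $\delta_{Y,2}^2$ rather than $\delta_{Y,2}$. Everything else — the triangle-inequality bound, the $Z\oplus W$ decomposition, and the planar identity for $\tilde Q$ — is routine.
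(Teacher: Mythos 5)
Your proposal is correct, but note that the paper itself offers no proof of this lemma: it is imported verbatim as Theorem~3 of \cite{Stern2015}, so the only meaningful comparison is with Stern's original argument, which proceeds exactly as you do --- the triangle inequality for the first bound, and for the second the reduction to the plane $U=\operatorname{span}\{Py,(I-P)y\}$, transport to $\ell_2^2$ by a (near-)optimal Banach--Mazur isomorphism, the planar Kato identity $\|I-\tilde Q\|=\|\tilde Q\|$, and the pull-back of $\|\tilde Q\|$ to $\|T\|\,\|T^{-1}\|\,\|P\|$, which is precisely where the square on $\delta_{Y,2}$ comes from. All the steps you flag as delicate (attainment or $\varepsilon$-approximation of the Banach--Mazur distance, nondegeneracy of the pair $(z,w)$, and $\tilde Q\neq 0,I$ so the Kato identity applies) are handled correctly.
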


Let $C_n:=\min\big(1+\Lambda_n^{-1}, \delta_{X,2}^2\big)$.
Lemma \ref{XZBanach} with $P=\Pi_n$ implies that 
\begin{equation}\label{gammanimproved}
    \gamma_n=\|I-\Pi_n\|_{K^\prime}\leq C_n\|\Pi_n\|_{K^\prime}=C_n\Lambda_n,
\end{equation}
and thus slightly improve the bound \eqref{gamman} when $\delta_{X,2}$ is small enough. Due to Theorem \ref{EIMthm} and \eqref{gammanimproved}, in the case $X=W^k_p(\Omega)$ ($\delta_{X,2}\leq2^{|\frac{1}{2}-\frac{1}{p}|}$),  Corollary \ref{EIMcor} is slightly improved as
\begin{equation*}
        \sup_{f\in K}\|f-\Pi_{n-1}f\|\leq CC_{n-1}\big(C_1\cdots C_{n-1}\big)^{\frac{1}{n}}\Lambda_{n-1}\left(\prod_{k=1}^{n-1}\Lambda_k\right)^{\frac{1}{n}}n^{\frac{1}{2}+|\frac{1}{2}-\frac{1}{p}|}\varepsilon_n({\rm co}(K)),
\end{equation*}
where $C_n=\min\{1+\Lambda_n^{-1},2^{|1-\frac{2}{p}|}\}$.

\subsection{Numerical Convergence of the EIM}\label{subsec:NumEIM}

In the end of this section, we numerically test the dependence of  convergence rates of the EIM on the smoothness of the function class $K$ as well as the Banach--Mazur property of the underlying Banach space $X$. In particular, we consider the $\verb|ReLU|_m$ function $\sigma_m(x)=\max(x,0)^m$ and the compact family of parametric functions on $\Omega\subset\mathbb{R}^d$: 
\begin{equation}\label{Km}
    K_m=\big\{\sigma_m(w\cdot x+b): w\in\mathbb{R}^d,~\|w\|_{\ell_2}=1,~\beta_1\leq b\leq\beta_2\big\},
\end{equation}
where $\beta_1, \beta_2$ are constants. The $\verb|ReLU|_m$ with $m=1$ is a mainstream activation function widely used in deep neural networks and machine learning tasks. Decay rates of $d_n(K)$ and entropy numbers of ${\rm co}(K)$ have been analyzed in  \cite{SiegelXuFoCM,SiegelZonoids2023}: 
\begin{subequations}\label{ReLUk}
    \begin{align}
d_n(K_m)_{L_p(\Omega)}&\leq Cn^{-\frac{pm+1}{pd}},\label{widthReLULp}\\
\varepsilon_{\lceil n\log n\rceil}({\rm co}(K_m))_{L_p(\Omega)}&\leq Cn^{-\frac{1}{2}-\frac{2m+1}{2d}},\label{entropyReLULp}
    \end{align}
\end{subequations}
where $2\leq p\leq\infty$.
The function family $K_m$ with larger $m$ (and thus higher regularity) implies asymptotically smaller entropy numbers and faster convergence rate of the EIM based on $K_m$. Combining \eqref{entropyReLULp} with \eqref{EIMerr2} in Corollary \ref{EIMcor}, we obtain the error estimate for $2\leq p\leq\infty$:
\begin{equation}\label{ReLU1derror}
\begin{aligned}
        &\sup_{f\in K_m}\|f-\Pi_{\lceil n\log n\rceil-1}f\|_{L_p(\Omega)}\\
        &\quad\leq C(1+\Lambda_{\lceil n\log n\rceil-1})\left(\prod_{k=1}^{\lceil n\log n\rceil-1}(1+\Lambda_k)\right)^{\frac{1}{n}}n^{\frac{1}{2}-\frac{1}{p}-\frac{2m+1}{2d}}.
    \end{aligned}
\end{equation}
The endpoint convergence rate of the EIM in $L_2(\Omega)$ is half-order higher than in $L_\infty(\Omega)$, due to the Banach--Mazur estimates  $\delta_{L_2(\Omega),n}=1$, $\delta_{L_\infty(\Omega),n}\leq\sqrt{n}$ in \eqref{deltan}. On the other hand, using \eqref{widthReLULp} and the classical error bound \eqref{classicalEIMWkp}, we have 
\begin{equation}\label{ReLU1derrorwidth}
    \sup_{f\in K}\|f-\Pi_nf\|_{L_p(\Omega)}\leq C(1+\Lambda_n)^3n^{-\frac{pm+1}{pd}+1},\quad 2\leq p\leq\infty,
\end{equation}
where the predicted order of convergence is lower than the entropy-based result \eqref{ReLU1derror}.

Let $\Omega=[0,1]$ be the unit interval, $\{x_i\}_{1\leq i\leq 1000}$ be 1000 equidistributed sample points in $[0,1]$ and $\mathcal{L}_{1000}=\{\ell_{x_i}\}_{1\leq i\leq1000}$ be the set of pointwise evaluations at $x_1, \ldots, x_{1000}$. We set $\beta_1=-2$, $\beta_2=2$ in \eqref{Km}.
The convergence history of EIMs using  $L_{\infty}(\Omega)$ and $L_2(\Omega)$ norms (Algorithm \ref{EIM} with $K=K_m$,  $\mathcal{L}=\mathcal{L}_{1000}$, $X=L_{\infty}(\Omega)$ or $X=L_2(\Omega)$) is presented in Figure \ref{fig:EIMerror}. The numerical results approximately match the predicted orders $O(n^{-m-\frac{1}{2}})$ and $O(n^{-m})$ in \eqref{ReLU1derror}, quantitatively illustrating the influence of the regularity index $m$ as well as the type of Banach spaces. 
It is difficult to exactly calculate the interpolation operator norm   $\Lambda_n=\|\Pi_n\|_{K^\prime}.$ In the experiment, we only compute the simple upper bound $$\tilde{\Lambda}_n:=\sup_{x\in[0,1]}\sum_{i=1}^n|h_i(x)|\geq\Lambda_n$$
for the EIM in $L_\infty$,
where $h_i$ is defined in \eqref{explicitEIM}. It is observed from Figure \ref{fig:Lambdan} that $\tilde{\Lambda}_n$ only mildly grows for $\texttt{ReLU}_m$ families. 

\begin{figure}[thp]
\begin{minipage}{1.0\linewidth}
    \centering
    \includegraphics[width=.49\linewidth]{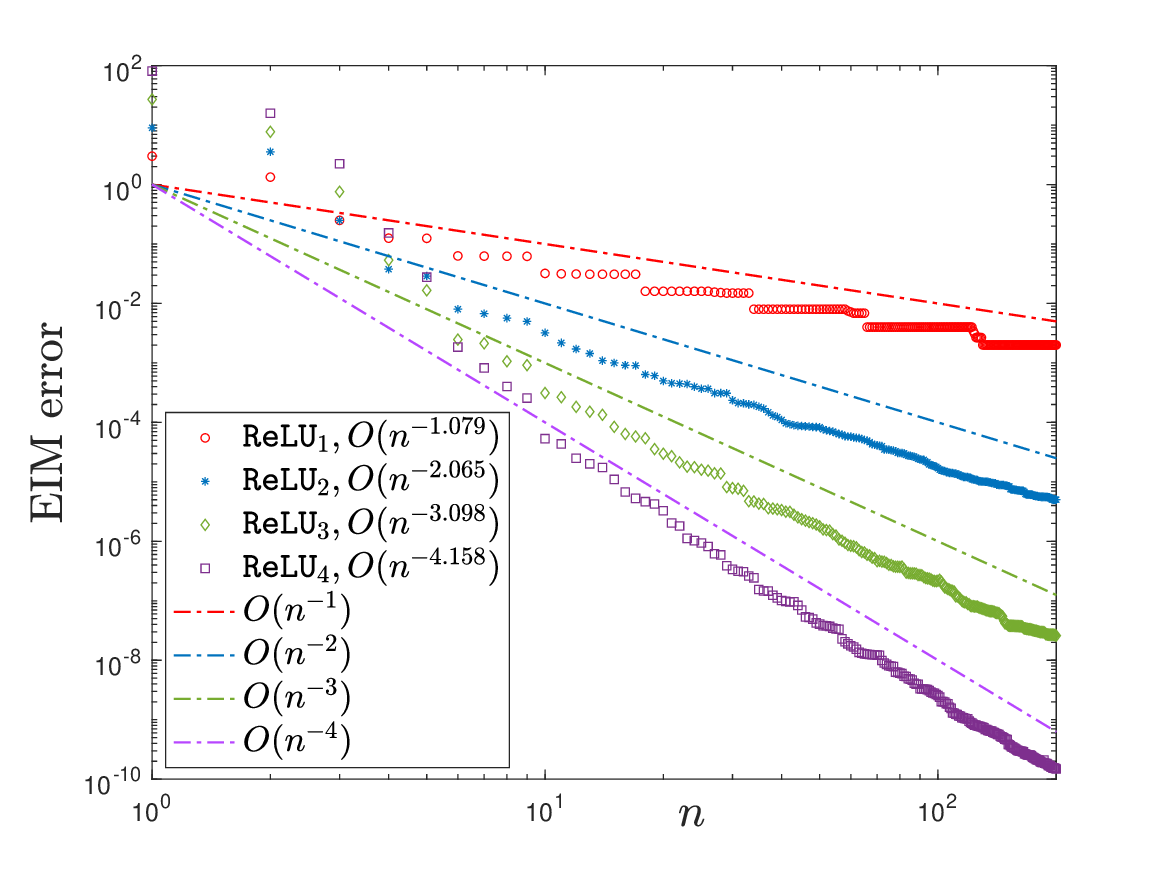}
    \includegraphics[width=.49\linewidth]{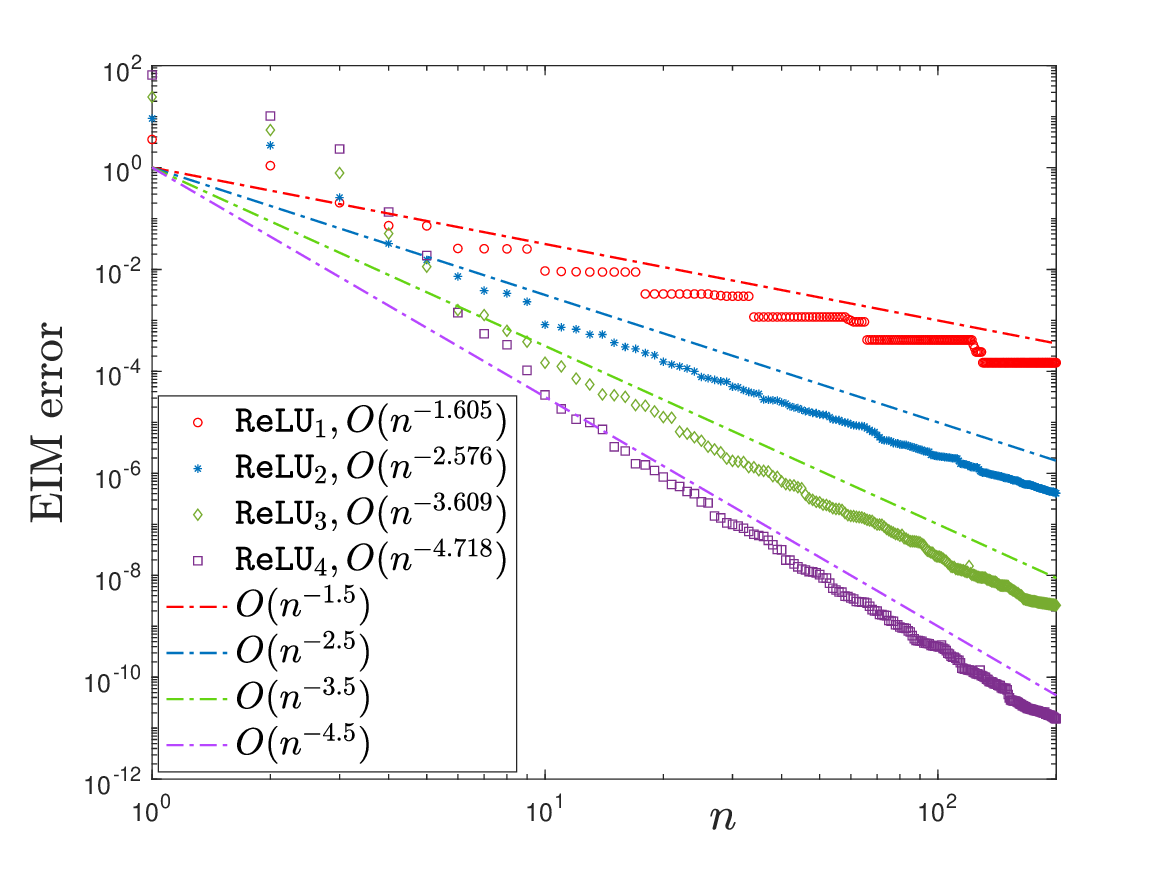} 
    \caption{Errors of the EIM in $L_\infty[0,1]$ (left); errors of the EIM in $L_2[0,1]$ (right).}
\label{fig:EIMerror}\end{minipage}
\end{figure} 

\begin{figure}[thp]
    \centering
\includegraphics[width=8.5cm,height=6cm]{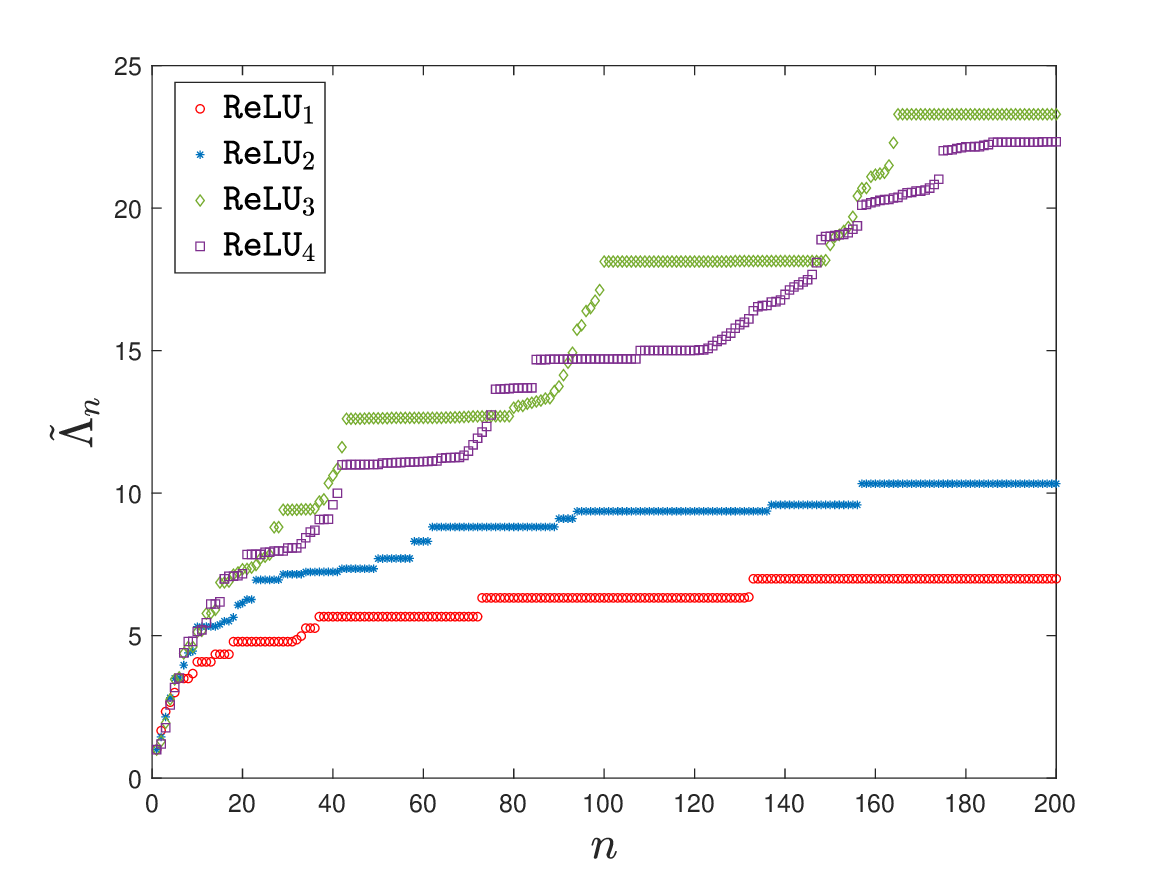}
    \caption{Estimates of $\Lambda_n$  in $L_\infty[0,1]$.}
    \label{fig:Lambdan}
  \end{figure}
  

\section{Convergence of the CGA}\label{secCGA}
We are going to derive convergence estimates of the CGA (Algorithm \ref{CGA}) in terms of the metric entropy of ${\rm co}(K)$. Another key concept used in our analysis is the modulus of smoothness of the Banach space $X$:
\begin{equation}\label{smoothness}
    \rho_X(t)=\sup_{\|x\|=1, \|y\|=t}\left\{\frac{1}{2}\|x+y\|+\frac{1}{2}\|x-y\|-1\right\}.
\end{equation}
It is straightforward to check that $\rho_X(t)\leq t$.
In this section, we assume that for some $s\in(1,2]$ and for all $t>0$, $\rho_X$ satisfies 
\begin{equation}\label{rhoX}
    \rho_X(t)\leq C_Xt^s,
\end{equation}
where  $C_X>0$ is a constant. In fact, the convergence rate of $\rho_X$ as $t\to0^+$ is known for many Banach spaces. 
For example, for $t>0$ we have (see Section 1.e. of \cite{LindenstraussTzafriri1979})
\begin{equation}\label{rhoXWkp}
\rho_X(t)\leq\left\{\begin{aligned}
    &C_Xt^p,\quad\text{when } X=W^k_p(\Omega),~1\leq p<2,\\
    &C_Xt^2,\quad \text{when } X=W^k_p(\Omega),~2\leq p<\infty.  
\end{aligned}\right.
\end{equation}
Under our assumption \eqref{rhoX}, the peak functional  $F_{r_n}$ in Algorithm \ref{CGA} satisfies
\begin{equation}\label{peakvanish}
    F_{r_n}(\varphi)=0,\quad\varphi\in X_n.
\end{equation}
In fact, \eqref{peakvanish} also holds 
when $X$ is only a uniformly smooth Banach space ($\rho_X(t)=o(t)$ as $t\to0^+$), see Lemma 2.1 in \cite{Temlyakov2001}. For the target function $f\in X$, we define \begin{equation*}
    \|f\|_{\mathcal{L}_1(K)}:=\inf\Big\{\sum_i|c_i|: f=\sum_ic_ig_i,~g_i\in K\text{ for each }i\Big\}.
\end{equation*}
For example, we have $\|f\|_{\mathcal{L}_1(K)}\leq c$  when $f/c\in{\rm co}(K)$.

\subsection{Error Bounds of the weak CGA}
The next theorem is our main result about the convergence of the weak CGA.
\begin{theorem}\label{CGAthm}
Assume \eqref{rhoX} holds.    For the weak CGA (Algorithm \ref{CGA}) we have
    \begin{equation*}
        \|f-f_n\|\leq2^{1+\frac{1}{s}}C_X^{\frac{1}{s}}(\alpha_1\cdots\alpha_n)^{-\frac{1}{n}}\delta_{n}n^{\frac{1}{s}-1}(n!V_n)^\frac{1}{n}\|f\|_{\mathcal{L}_1(K)}\varepsilon_n({\rm co}(K)).
    \end{equation*}
\end{theorem}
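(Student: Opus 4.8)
The plan is to mimic the strategy used for Theorem~\ref{EIMthm}: first establish a recursive decay estimate on the residual norms $\|r_n\|$ that produces a bound on the \emph{geometric mean} $\big(\prod_{k=1}^n\|r_{k-1}\|\big)^{1/n}$ in terms of the distances $\mathrm{dist}(g_k, X_{k-1})$ or similar quantities associated with the greedily chosen $g_k$, and then invoke Lemma~\ref{mainlemma} to bound that product by $\delta_n(n!V_n)^{1/n}\varepsilon_n(\mathrm{co}(K))$. Concretely, the starting point is the standard one-step inequality for the weak CGA under the smoothness assumption \eqref{rhoX}: using $f_n\in X_n$ minimizing $\|f-\cdot\|$, the vanishing property \eqref{peakvanish} of the peak functional, and expanding $\|r_{n-1}-\lambda g_n\|$ via the modulus of smoothness, one obtains the classical recursion
\begin{equation*}
    \|r_n\|^s \leq \|r_{n-1}\|^s - c\,\frac{\big(\alpha_n\sup_{g\in K}|F_{r_{n-1}}(g)|\big)^s}{\|f\|_{\mathcal{L}_1(K)}^{s-1}}\cdot\frac{\|r_{n-1}\|^{s-1}}{\text{(suitable power)}},
\end{equation*}
but the cleaner route (following Temlyakov and the OGA analysis in \cite{LiSiegel2024}) is to lower-bound $\sup_{g\in K}|F_{r_{n-1}}(g)|\geq \|r_{n-1}\|/\|f\|_{\mathcal{L}_1(K)}$, since $f=\sum c_i g_i$ with $\sum|c_i|\le\|f\|_{\mathcal{L}_1(K)}+\epsilon$ and $F_{r_{n-1}}(f)=F_{r_{n-1}}(r_{n-1})=\|r_{n-1}\|$ by \eqref{peakvanish}.

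Next I would convert the resulting scalar recursion for $a_n:=\|r_n\|$ into a bound of the form $a_n\leq C\, n^{1/s-1}$ times the relevant geometric quantities. The key technical move is to relate the greedy selection to \emph{distances}: because $g_n$ is chosen so that $|F_{r_{n-1}}(g_n)|\geq \alpha_n\sup_{g\in K}|F_{r_{n-1}}(g)|$ and $F_{r_{n-1}}$ annihilates $X_{n-1}$ with $\|F_{r_{n-1}}\|_{X'}=1$, we get
\begin{equation*}
    \mathrm{dist}(g_n, X_{n-1}) \geq |F_{r_{n-1}}(g_n)| \geq \alpha_n \sup_{g\in K}|F_{r_{n-1}}(g)| \geq \alpha_n\,\frac{\|r_{n-1}\|}{\|f\|_{\mathcal{L}_1(K)}}.
\end{equation*}
Taking the product over $k=1,\dots,n$, applying Lemma~\ref{mainlemma} to $v_k=g_k\in K$, and using the monotonicity $\|r_n\|\leq\|r_{n-1}\|$ (guaranteed by Step~4 of Algorithm~\ref{CGA}) to replace $\prod_{k=1}^n\|r_{k-1}\|$ by $\|r_{n-1}\|^{?}$ — actually by $\|r_n\|^n$ from below in the reversed direction — yields
\begin{equation*}
    \|r_n\|\;\Big(\tfrac{1}{\|f\|_{\mathcal{L}_1(K)}}\Big)\;(\alpha_1\cdots\alpha_n)^{1/n} \;\leq\; \Big(\prod_{k=1}^n\mathrm{dist}(g_k,X_{k-1})\Big)^{1/n}\;\cdot\;(\text{correction from } a_n\le a_{k-1}) \;\leq\; \delta_n (n!V_n)^{1/n}\varepsilon_n(\mathrm{co}(K)).
\end{equation*}
This alone gives the $\delta_n(n!V_n)^{1/n}\varepsilon_n(\mathrm{co}(K))\|f\|_{\mathcal{L}_1(K)}$ part of the bound but not the crucial extra decay factor $n^{1/s-1}$; recovering that factor is where the smoothness hypothesis \eqref{rhoX} must be reinjected.

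The main obstacle, therefore, is combining the two mechanisms: the purely geometric bound from Lemma~\ref{mainlemma} (which controls a geometric mean and loses the polynomial gain) and the analytic decay $n^{1/s-1}$ coming from the modulus-of-smoothness recursion (the classical \eqref{classicalCGA} mechanism). I expect the right approach is to \emph{not} bound $\prod\|r_{k-1}\|$ crudely by $\|r_n\|^n$, but instead run the smoothness recursion to show $\|r_{k-1}\|\lesssim \|f\|_{\mathcal{L}_1(K)} k^{1/s-1}\cdot(\text{entropy-type term})$ as an inductive hypothesis, feed this back into the geometric-mean inequality, and solve the resulting self-consistent bound for $\|r_n\|$ — the powers of $n$ combine as $n^{1/s-1}$ precisely because $\frac1n\sum_{k=1}^n \log(k^{1/s-1})\sim (1/s-1)\log n$ up to an $O(1)$ additive term, which gets absorbed into the constant $2^{1+1/s}C_X^{1/s}$. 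I would carry out the bookkeeping of constants carefully at the end, tracking the factor $2^{1+1/s}$ from the one-step inequality (expansion of $\|x+y\|+\|x-y\|$) and $C_X^{1/s}$ from inverting $\rho_X(t)\le C_X t^s$, and verify the $\alpha_k$-dependence enters only through $(\alpha_1\cdots\alpha_n)^{-1/n}$ as claimed.
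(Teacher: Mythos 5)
Your ingredients are the right ones (the modulus-of-smoothness one-step inequality, the lower bound $\sup_{g\in K}|F_{r_{n-1}}(g)|\geq\|r_{n-1}\|/\|f\|_{\mathcal{L}_1(K)}$ via \eqref{peakvanish}, and Lemma~\ref{mainlemma} applied to the selected $g_k$), but the step that actually combines the analytic decay $n^{\frac1s-1}$ with the entropy factor is missing, and the fix you sketch would not close. The paper's mechanism is: replace $g_n$ by $\phi_n=g_n-\bar g_n$ with $\|\phi_n\|=\mathrm{dist}(g_n,X_{n-1})$ (legitimate because $F_{r_{n-1}}$ annihilates $X_{n-1}$, so $F_{r_{n-1}}(\phi_n)=F_{r_{n-1}}(g_n)$), optimize the step size $\lambda$ in \eqref{rnrnminus} to obtain the recursion $a_n^t\leq a_{n-1}^t\bigl(1-b_na_{n-1}^t\bigr)$ with $t=\frac{s}{s-1}$, $a_n=\|r_n\|/\|f\|_{\mathcal{L}_1(K)}$ and $b_n=2^{-1}(4C_X)^{-\frac{1}{s-1}}\alpha_n^t\,\mathrm{dist}(g_n,X_{n-1})^{-t}$; then the induction lemma of \cite{LiSiegel2024} gives $a_n^t\leq(1+b_1+\cdots+b_n)^{-1}$, and the AM--GM inequality $b_1+\cdots+b_n\geq n(b_1\cdots b_n)^{\frac1n}$ produces \emph{in one stroke} both the factor $\frac1n$ (which becomes $n^{\frac1s-1}$ after taking the $t$-th root) and the single geometric mean $\bigl(\prod_{k=1}^n\mathrm{dist}(g_k,X_{k-1})\bigr)^{\frac tn}$, to which Lemma~\ref{mainlemma} is applied exactly once at the end. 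The distances thus enter through the \emph{smoothness} term $\rho_X(\lambda\|\phi_n\|/\|r_{n-1}\|)$, not through your inequality $\mathrm{dist}(g_n,X_{n-1})\geq|F_{r_{n-1}}(g_n)|$, which by itself only reproduces the weaker bound without $n^{\frac1s-1}$, as you observe.

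Your proposed repair --- an inductive hypothesis $\|r_{k-1}\|\lesssim k^{\frac1s-1}\cdot(\text{entropy term})$ fed back into a geometric-mean inequality over $k=1,\dots,n$ --- would entangle $\varepsilon_k(\mathrm{co}(K))$ for all $k\leq n$; since entropy numbers are nonincreasing, $\bigl(\prod_{k=1}^n\varepsilon_k(\mathrm{co}(K))\bigr)^{\frac1n}\geq\varepsilon_n(\mathrm{co}(K))$, so this route cannot recover a bound featuring $\varepsilon_n(\mathrm{co}(K))$ alone and the self-consistency does not close in the form stated. The displayed recursion with the ``(suitable power)'' placeholder is likewise not an inequality you have derived. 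To complete the argument you must carry out the $\lambda$-optimization explicitly (which is where the constants $2^{1+\frac1s}C_X^{\frac1s}$ and the exponent $t$ come from) and invoke the induction lemma; without that, the $n^{\frac1s-1}$ gain is not obtained.
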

\begin{proof}
Let $\bar{g}_n\in X_{n-1}$ be the element in $X_{n-1}$ that is closest to $g_n$ in Algorithm \ref{CGA}. For convenience, we set
\begin{equation}
    \phi_n=g_n-\bar{g}_n,\quad \|\phi_n\|={\rm dist}(g_n,X_{n-1}).
\end{equation}
The definition of $\rho_X$ \eqref{smoothness} implies for any $x, y\in X,$ 
\begin{equation*}
    \|x+y\|+\|x-y\|\leq2\|x\|\left(\rho_X\left(\frac{\|y\|}{\|x\|}\right)+1\right).
\end{equation*}
It then follows that for $s_n=\text{sign}(F_{r_{n-1}}(g_n))$ and any positive parameter $\lambda\in\mathbb{R}_+$,
\begin{equation}\label{rnplusminus}
\begin{aligned}
&\|r_{n-1}-s_n\lambda\phi_n\|+\|r_{n-1}+s_n\lambda\phi_n\|\\
&\leq2\|r_{n-1}\|\left(1+\rho_X\left(\frac{\lambda \|\phi_n\|}{\|r_{n-1}\|}\right)\right).
\end{aligned}
\end{equation}
Using the definition of $F_{r_{n-1}}$ and $g_n$ in Algorithm \ref{CGA} and \eqref{peakvanish}, \eqref{peak}, we have
\begin{equation}\label{Frnminus}
\begin{aligned}
&\alpha_n^{-1}|F_{r_{n-1}}(\phi_n)|=\alpha_n^{-1}|F_{r_{n-1}}(g_n)|\geq\sup_{g\in K} |F_{r_{n-1}}(g)|\\
&\geq\|f\|^{-1}_{\mathcal{L}_1(K)}F_{r_{n-1}}(f)=\|f\|^{-1}_{\mathcal{L}_1(K)}F_{r_{n-1}}(r_{n-1})=\|f\|^{-1}_{\mathcal{L}_1(K)}\|r_{n-1}\|.
\end{aligned}
\end{equation}
As a result of \eqref{Frnminus},
\begin{equation}\label{rnplus}
\begin{aligned}
&\|r_{n-1}+\lambda s_n\phi_n\|\geq F_{r_{n-1}}(r_{n-1}+\lambda s_n\phi_n)\\
&=\|r_{n-1}\|+\lambda|F_{r_{n-1}}(\phi_n)|\geq\Big(1+\lambda\alpha_n\|f\|^{-1}_{\mathcal{L}_1(K)}\Big)\|r_{n-1}\|.
\end{aligned}
\end{equation}
Combining \eqref{rnplusminus} with \eqref{rnplus} then leads to
\begin{equation}\label{rnrnminus}
\begin{aligned}
&\|r_n\|\leq\inf_{\lambda>0}\|r_{n-1}-\lambda s_n\phi_n\|\\
&\leq2\|r_{n-1}\|\left(1+\rho_X\left(\frac{\lambda \|\phi_n\|}{\|r_{n-1}\|}\right)\right)-\Big(1+\lambda\alpha_n\|f\|^{-1}_{\mathcal{L}_1(K)}\Big)\|r_{n-1}\|\\
&\leq\|r_{n-1}\|\left(1-\lambda\alpha_n\|f\|^{-1}_{\mathcal{L}_1(K)}+2C_X\left(\frac{\lambda\|\phi_n\|}{\|r_{n-1}\|}\right)^s\right).
\end{aligned}
\end{equation}
Now we set $t=s/(s-1)>1$ and
\begin{equation*}
    2C_X\left(\frac{\lambda\|\phi_n\|}{\|r_{n-1}\|}\right)^s=\frac{\lambda}{2}\alpha_n\|f\|_{\mathcal{L}_1(K)}^{-1}.
\end{equation*}
Direct calculation shows that
\begin{equation*}
    \lambda=\|\phi_n\|^{-t}\|r_{n-1}\|^t\alpha_n^{\frac{1}{s-1}}\Big(4C_X\|f\|_{\mathcal{L}_1(K)}\Big)^{-\frac{1}{s-1}}.
\end{equation*}
Then \eqref{rnrnminus} reduces to 
\begin{equation}\label{recurrence0}
    \|r_n\|\leq\|r_{n-1}\|\big(1-2^{-1}(4C_X)^{-\frac{1}{s-1}}\alpha_n^t\|\phi_n\|^{-t}\|f\|_{\mathcal{L}_1(K)}^{-t}\|r_{n-1}\|^t\big).
\end{equation}  
We define $a_n=\|r_n\|/\|f\|_{\mathcal{L}_1(K)}$, $b_n=2^{-1}(4C_X)^{-\frac{1}{s-1}}\alpha_n^t\|\phi_n\|^{-t}$. Taking the $t$-th power on both sides of \eqref{recurrence0} leads to the 
recurrence relation 
\begin{equation}\label{recurrence}
    a_n^t\leq a_{n-1}^t\big(1-b_na_{n-1}^t\big).
\end{equation}  
Using \eqref{recurrence}, the induction lemma (Lemma 4.1) in \cite{LiSiegel2024} and Lemma \ref{mainlemma}, we obtain
\begin{equation*}
            \begin{aligned}
       a_n^t&\leq\frac{1}{1+b_1+\cdots+b_n}\leq\frac{1}{n}\left(b_1\cdots b_n\right)^{-\frac{1}{n}}\\
        &=2(4C_X)^{\frac{1}{s-1}}(\alpha_1\cdots\alpha_n)^{-\frac{t}{n}}\frac{1}{n}\left(\prod_{k=1}^n{\rm dist}(g_k,X_{k-1})\right)^\frac{t}{n}\\
        &\leq2(4C_X)^{\frac{1}{s-1}}(\alpha_1\cdots\alpha_n)^{-\frac{t}{n}}\delta_n^t\frac{(n!V_n)^\frac{t}{n}}{n}\varepsilon_n({\rm co}(K))^t.
\end{aligned}
\end{equation*}
The proof is complete. 
\end{proof}

Convergence analysis of the weak CGA can be extended to a target function  $f$ without bounded $\|\bullet\|_{\mathcal{L}_1(K)}$ norm. The key tool is the interpolation space based on the $K$-functional $$K(t,g):=\inf_{h\in\mathcal{L}_1(K)}\left(\|g-h\|_X+t\|h\|_{\mathcal{L}_1(K)}\right),$$
where $t>0$ and $g\in X$  (see \cite{DeVoreLorentz1993}). For the index  $\theta\in(0,1)$, the interpolation norm  between $X$ and $\mathcal{L}_1(K)$ is
\begin{align*}
\|g\|_\theta=\|g\|_{[X,\mathcal{L}_1(K)]_{\theta,\infty}}:=\sup_{0<t<\infty}t^{-\theta}K(t,g).
\end{align*}
Then the interpolation space $$X_\theta:=\{f\in X: \|f\|_\theta<\infty\}$$ is larger than $\mathcal{L}_1(K)$.
Following the same analysis in \cite{BarronCohenDahmenDeVore2008}, we obtain an explicit and improved error estimate of the weak CGA  for $f\in X_\theta$.
\begin{corollary}
    For all $f\in X_\theta$ and $\theta\in(0,1)$, Algorithm \ref{CGA} satisfies
\begin{equation*}
    \|f-f_n\|\leq2^{\theta+\frac{\theta}{s}}C_X^{\frac{\theta}{s}}(\alpha_1\cdots\alpha_n)^{-\frac{\theta}{n}}\delta^\theta_{n}n^{\frac{\theta}{s}-\theta}(n!V_n)^\frac{\theta}{n}\|f\|_\theta \varepsilon_n({\rm co}(K))^\theta.
\end{equation*}
\end{corollary}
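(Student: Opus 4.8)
The plan is to deduce the corollary from Theorem~\ref{CGAthm} by an interpolation-space argument in the style of \cite{BarronCohenDahmenDeVore2008}. The key observation is that for any $h\in\mathcal{L}_1(K)$, the weak CGA run on $f$ produces iterates $f_n$ that are near-optimal; but rather than re-running the algorithm, I would exploit that $f = (f-h) + h$, apply Theorem~\ref{CGAthm}-type control to the ``$\mathcal{L}_1$ part'' $h$, and absorb the remainder $\|f-h\|_X$ directly. Concretely, for a fixed $n$ one writes, using the near-best property of $f_n$ in $X_n$ and the triangle inequality,
\begin{equation*}
  \|f-f_n\| \le \|f-h\|_X + \|h - P(h)\|,
\end{equation*}
where $P(h)$ denotes what the greedy iterate would be for target $h$; more carefully one runs the algorithm on $f$ but compares residuals against $h$, so that the error splits as the sum of a term proportional to $\|f-h\|_X$ (with an absolute constant) and a term that, by the proof of Theorem~\ref{CGAthm}, is bounded by
\begin{equation*}
  2^{1+\frac1s}C_X^{\frac1s}(\alpha_1\cdots\alpha_n)^{-\frac1n}\delta_n n^{\frac1s-1}(n!V_n)^{\frac1n}\|h\|_{\mathcal{L}_1(K)}\varepsilon_n({\rm co}(K)).
\end{equation*}

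Next I would introduce the shorthand $A_n := 2^{1+\frac1s}C_X^{\frac1s}(\alpha_1\cdots\alpha_n)^{-\frac1n}\delta_n n^{\frac1s-1}(n!V_n)^{\frac1n}\varepsilon_n({\rm co}(K))$, so that the combined bound reads $\|f-f_n\|\le C\big(\|f-h\|_X + A_n\|h\|_{\mathcal{L}_1(K)}\big)$ for every $h\in\mathcal{L}_1(K)$, with $C$ an absolute constant (in fact one can track $C=1$ if the splitting is done carefully as in \cite{BarronCohenDahmenDeVore2008}). Taking the infimum over $h$ gives $\|f-f_n\|\le C\,K(A_n,f)$, the $K$-functional evaluated at $t=A_n$. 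The definition of the interpolation norm, $\|f\|_\theta=\sup_{t>0}t^{-\theta}K(t,f)$, then yields $K(A_n,f)\le A_n^\theta\|f\|_\theta$, and substituting the definition of $A_n$ produces exactly
\begin{equation*}
  \|f-f_n\|\le 2^{\theta+\frac{\theta}{s}}C_X^{\frac{\theta}{s}}(\alpha_1\cdots\alpha_n)^{-\frac{\theta}{n}}\delta_n^\theta n^{\frac{\theta}{s}-\theta}(n!V_n)^{\frac{\theta}{n}}\|f\|_\theta\,\varepsilon_n({\rm co}(K))^\theta,
\end{equation*}
after raising $A_n$ to the power $\theta$ and checking that the absolute multiplicative constant is absorbed (the stated bound carries constant $2^{\theta+\theta/s}$, consistent with $C=1$ in the splitting).

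The main obstacle is the first step: establishing the inequality $\|f-f_n\|\le \|f-h\|_X + A_n\|h\|_{\mathcal{L}_1(K)}$ with a clean constant. This requires revisiting the proof of Theorem~\ref{CGAthm} and observing that the recursion for the residual norms $\|r_n\|$ only used $\|f\|_{\mathcal{L}_1(K)}$ through the peak-functional inequality \eqref{Frnminus}, namely $|F_{r_{n-1}}(g)|\ge\|f\|_{\mathcal{L}_1(K)}^{-1}\|r_{n-1}\|$ for the maximizing $g\in K$. When $f\notin\mathcal{L}_1(K)$ one instead decomposes $r_{n-1}=(f-h)+(h-f_{n-1})$ and estimates $F_{r_{n-1}}(h)=F_{r_{n-1}}(r_{n-1})-F_{r_{n-1}}(f-h)\ge\|r_{n-1}\|-\|f-h\|_X$, which only helps while $\|r_{n-1}\|>2\|f-h\|_X$; once $\|r_{n-1}\|\le 2\|f-h\|_X$ the desired bound holds trivially. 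Carrying this two-regime bookkeeping through the same choice of $\lambda$ and the induction lemma (Lemma~4.1) of \cite{LiSiegel2024} reproduces the recursion with $\|f\|_{\mathcal{L}_1(K)}$ replaced by $\|h\|_{\mathcal{L}_1(K)}$ plus an additive $\|f-h\|_X$ floor, and then the interpolation step above is routine. I expect no difficulty in the algebra of substituting $A_n^\theta$; the care is entirely in the residual recursion and in tracking that the final constant matches $2^{\theta+\theta/s}C_X^{\theta/s}$.
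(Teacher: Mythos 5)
Your overall route is the same one the paper intends: the paper gives no proof of this corollary beyond the remark that it follows ``the same analysis'' as in \cite{BarronCohenDahmenDeVore2008}, and that analysis is exactly your plan --- establish a mixed bound $\|f-f_n\|\le\|f-h\|_X+A_n\|h\|_{\mathcal{L}_1(K)}$ for every $h\in\mathcal{L}_1(K)$ with $A_n$ the constant of Theorem~\ref{CGAthm}, infimize over $h$ to recognize the $K$-functional $K(A_n,f)$, and then use $K(A_n,f)\le A_n^{\theta}\|f\|_{\theta}$. The final substitution of $A_n^{\theta}$ is indeed routine and gives precisely the stated constant, provided the mixed bound holds with constant one.

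That proviso is the one place your sketch does not yet deliver the statement. The two-regime bookkeeping you describe (``the peak-functional inequality helps while $\|r_{n-1}\|>2\|f-h\|_X$; otherwise the bound holds trivially'') only yields a bound of the form $\max\bigl(2\|f-h\|_X,\;\cdot\bigr)$: once the residual drops below $2\|f-h\|_X$ you can conclude it stays below $2\|f-h\|_X$ (residual norms are non-increasing), not below $\|f-h\|_X$, and this factor $2$ survives the infimum over $h$ and the passage to $K(t,f)$, producing an extra multiplicative constant that the corollary does not have. The clean way to get the unit constant --- and the way that literally reproduces the recursion of Theorem~\ref{CGAthm} --- is to avoid regimes and run the recursion for $u_k:=\bigl(\|r_k\|-\|f-h\|_X\bigr)_+$. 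The modified \eqref{Frnminus} reads $\alpha_n^{-1}|F_{r_{n-1}}(g_n)|\ge\|h\|_{\mathcal{L}_1(K)}^{-1}\bigl(\|r_{n-1}\|-\|f-h\|_X\bigr)=\|h\|_{\mathcal{L}_1(K)}^{-1}u_{n-1}$, and in the smoothness term one uses $\|r_{n-1}\|^{1-s}\le u_{n-1}^{1-s}$ (valid since $1-s<0$ and $\|r_{n-1}\|\ge u_{n-1}$), so that after the same choice of $\lambda$ one obtains \eqref{recurrence0} verbatim with $\|r_k\|$ replaced by $u_k$ and $\|f\|_{\mathcal{L}_1(K)}$ by $\|h\|_{\mathcal{L}_1(K)}$; since $u_0\le\|h\|\le\|h\|_{\mathcal{L}_1(K)}$, the induction lemma of \cite{LiSiegel2024} still applies and yields $\|r_n\|\le\|f-h\|_X+A_n\|h\|_{\mathcal{L}_1(K)}$ with constant one. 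With that repair the rest of your argument closes and matches the stated bound exactly.
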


\subsection{Convergent Examples of the CGA}
We recall the classical convergence result of the weak CGA given in \cite{Temlyakov2001}:
\begin{equation}\label{classicalWCGA}
    \|f-f_n\|\leq C(1+\alpha_1^{\frac{s}{s-1}}+\cdots+\alpha^{\frac{s}{s-1}}_n)^{\frac{1}{s}-1},\quad f\in{\rm co}(K).
\end{equation}
Combining \eqref{classicalWCGA}  with \eqref{rhoXWkp} then yields 
\begin{equation}\label{classicalWCGAasymptoticWkp}
\|f-f_n\|\leq\left\{\begin{aligned}
&C(1+\alpha_1^2+\cdots+\alpha^2_n)^{\frac{1}{2}}\quad\text{ when }X=W^k_p(\Omega),~2\leq p<\infty,\\
   &C(1+\alpha_1^{\frac{p}{p-1}}+\cdots+\alpha^{\frac{p}{p-1}}_n)^{\frac{1}{p}-1}\quad\text{ when }X=W^k_p(\Omega),~1<p<2.
\end{aligned}\right.
\end{equation}

In contrast, Theorem \ref{CGAthm} with $(n!V_n)^{\frac{1}{n}}=O(n^{\frac{1}{2}})$ yields the asymptotic estimates
\begin{equation}\label{myWCGAasymptotic}
\|f-f_n\|\leq C(\alpha_1\cdots\alpha_n)^{-\frac{1}{n}}\delta_{n}n^{\frac{1}{s}-\frac{1}{2}}\|f\|_{\mathcal{L}_1(K)}\varepsilon_n({\rm co}(K)).
\end{equation}
We note that the threshold constants $\{\alpha_i\}_{i\geq1}$ enter our error bound  in a  multiplicative way. It follows from \eqref{myWCGAasymptotic} and \eqref{rhoXWkp}, \eqref{deltan} that
\begin{equation}\label{myWCGAasymptoticWkp}
\|f-f_n\|\leq\left\{\begin{aligned}
&C(\alpha_1\cdots\alpha_n)^{-\frac{1}{n}}n^{\frac{1}{2}-\frac{1}{p}}\|f\|_{\mathcal{L}_1(K)}\varepsilon_n({\rm co}(K))\\
   &\qquad\text{ when }X=W^k_p(\Omega),~2\leq p<\infty,\\
   &C(\alpha_1\cdots\alpha_n)^{-\frac{1}{n}}n^{\frac{2}{p}-1}\|f\|_{\mathcal{L}_1(K)}\varepsilon_n({\rm co}(K))\\
   &\qquad\text{ when }X=W^k_p(\Omega),~1<p<2.
\end{aligned}\right.
\end{equation}
Theorem \ref{CGAthm} and its consequences \eqref{myWCGAasymptotic}, \eqref{myWCGAasymptoticWkp} are favorable when $\varepsilon_n({\rm co}(K))$ is small enough.

To illustrate the advantage of Theorem \ref{CGAthm} over \eqref{classicalWCGA},
we again consider the  $\texttt{ReLU}_m$  dictionary \eqref{Km} on $\Omega\subset\mathbb{R}^d$. Recall the estimate \eqref{entropyReLULp} of entropy numbers of $\texttt{ReLU}_m$ dictionaries $K_m$. For the CGA with $\alpha_1=\cdots=\alpha_n=1$, $X=L_p(\Omega)$ and $K=K_m$ in $\mathbb{R}^d$, the proposed error bound \eqref{myWCGAasymptoticWkp} improves upon the existing result \eqref{classicalWCGAasymptoticWkp} provided $\frac{2m+1}{2d}>|\frac{1}{2}-\frac{1}{p}|$.

\subsection{Numerical Convergence of the CGA}\label{subsec:NumCGA} In the Banach space $L_p(\Omega)$ on $\Omega=[0,1]$, we test the convergence of the CGA based on the dictionary $K=K_m$ in \eqref{Km} with $m=0$, $\beta_0=-2$, $\beta_1=2$. The target function is set to be $f(x)=\sin(\pi x)$ for $x\in\Omega$. For any $g\in L_p(\Omega)$, its peak functional $F_g\in L_{\frac{p}{p-1}}(\Omega)$ is 
\begin{equation*}
    F_g=\text{sign}(g)|g|^{p-1}/\|g\|^{p-1}_{L_p(\Omega)}.
\end{equation*}
To implement the CGA, we replace $K$ with its discrete subset $\big\{\sigma_0(wx+b): w=\pm1,~b=-2+5\times10^{-5}i,~i=0, 1, \ldots, 8\times10^4\big\}$.
The convergence history of the CGA is displayed in Figure \ref{fig:CGA}. We note the estimate \eqref{myWCGAasymptoticWkp} with \eqref{entropyReLULp} ensures that
\begin{equation*}
    \|f-f_{\lceil{n\log n}\rceil}\|_{L_p(\Omega)}\leq Cn^{-\frac{1}{p}-\frac{1}{2}}\|f\|_{\mathcal{L}_1(K)},\quad p\geq2.
\end{equation*}
However, it is observed from Figure \ref{fig:CGA} that the convergence rate of the CGA in $L_p$ is approximately $O(n^{-1})$ regardless of the value of $p$.

Currently we are not sure whether the convergence rate in Theorem \ref{CGAthm} is sharp or not. Such questions seem to be  technical and open even for the classical convergence result \eqref{classicalWCGA}. Therefore, verifying the sharpness of the proposed convergence rate of CGAs would be a future research direction.

\begin{figure}[thp]
    \centering
\includegraphics[width=8.5cm,height=6cm]{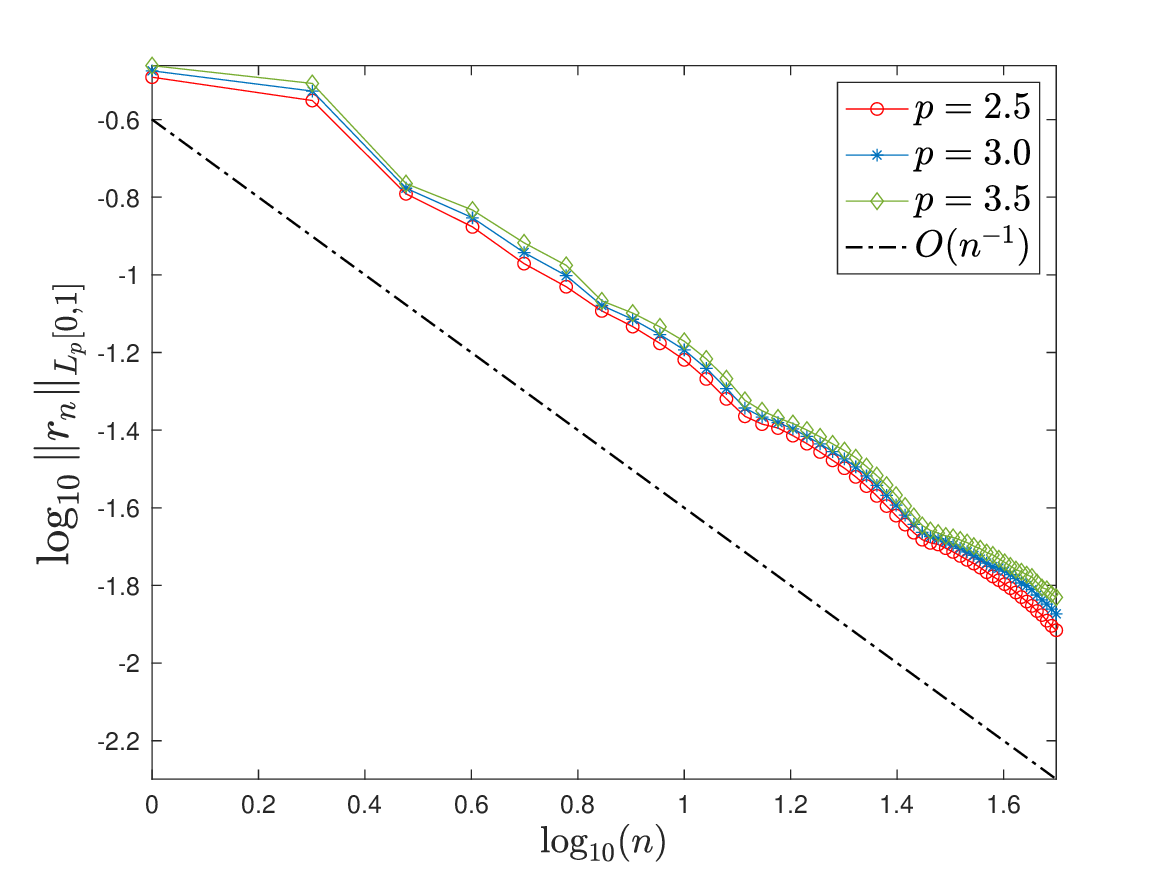}
    \caption{Convergence history of the CGA error.}
    \label{fig:CGA}
  \end{figure}

\section{Concluding Remarks} In this paper, we derived a novel convergence estimate of the EIM based on the entropy numbers of parametric function sets. We also obtained a new error bound of the weak CGA in terms of the entropy numbers of the dictionary. The codes for numerical examples in Subsections \ref{subsec:NumEIM} and \ref{subsec:NumCGA} are posted on  \href{https://github.com/yuwenli925/EIM_CGA}{github.com/yuwenli925/EIM\_CGA}.  The entropy-based convergence analysis explicitly characterizes the accuracy of those greedy algorithms using  regularity of function families and functional analytic properties of Banach spaces. Motivated by the ideas in this manuscript, we recently developed new algorithms with analysis for rational approximation and time-dependent model reduction, see \cite{LiLi2024REIM,LiWang2024PODEIM}.

The current manuscript lacks a rigorous estimate on the growth of the empirical interpolation operator norm $\Lambda_n$. Therefore, a future research direction is to quantify the effect of greedy selection of  interpolation points $\{x_i\}_{1\leq i\leq L}$ under practical assumptions and modifications, see \cite{DrmacGugercin2016} for a more robust index selection (an analogue of interpolation points selection) in the setting of  discrete EIMs. As mentioned in Subsection \ref{subsec:NumCGA}, we shall also verify the sharpness of the entropy-based error bound of the CGA.

\section*{Acknowledgments}
The author would like to thank the anonymous referees for many remarks that improve the quality of this paper.




\bibliographystyle{siamplain}

\end{document}